\newtheorem{condition}{Condition}
\newtheorem{theorem}{Theorem}
\newtheorem{proposition}{Proposition}
\newtheorem{lemma}{Lemma}
\newtheorem{example}{Example}
\newtheorem{definition}{Definition}
\newenvironment{proof}{\paragraph{Proof:}}{\hfill$\square$}
\author[1]{Andrea Rotnitzky \thanks{arotnitzky@utdt.edu}}
\affil[1]{Department of Economics, Universidad Torcuato Di Tella and CONICET}
\author[2]{Ezequiel Smucler \thanks{esmucler@utdt.edu}}
\affil[2]{Department of Mathematics and Statistics, Universidad Torcuato Di Tella}
\author[3]{James M. Robins \thanks{robins@hsph.harvard.edu}}
\affil[3]{Department of Epidemiology and Biostatistics, Harvard T.H. Chan School of Public Health}
\title{Characterization of parameters with a mixed bias property}
\date{}
\begin{document}

\maketitle
\begin{abstract}
In this article we study a class of parameters with the so-called `mixed
bias property'. For parameters with this property, the bias of the
semiparametric efficient one step estimator is equal to the mean of the
product of the estimation errors of two nuisance functions. In
non-parametric models, parameters with the mixed bias property admit
so-called rate doubly robust estimators, i.e. estimators that are consistent
and asymptotically normal when one succeeds in estimating both nuisance
functions at sufficiently fast rates, with the possibility of trading off
slower rates of convergence for the estimator of one of the nuisance
functions with faster rates for the estimator of the other nuisance. We show
that the class of parameters with the mixed bias property strictly includes
two recently studied classes of parameters which, in turn, include many
parameters of interest in causal inference. We characterize the form of
parameters with the mixed bias property and of their influence functions.
Furthermore, we derive two functional moment equations, each being solved at
one of the two nuisance functions, as well as, two functional loss
functions, each being minimized at one of the two nuisance functions. These
loss functions can be used to derive loss based penalized estimators of the
nuisance functions.
\end{abstract}

\section{Introduction}

\label{sec:setup}

Suppose that we are given a sample $\mathcal{D}_{n}$ of $n$ i.i.d. copies of
a random vector $O$ with law $P$ which is known to belong to  $\mathcal{M}=\left\{ P_{\eta }:\eta \in \mathbf{\eta }\right\} $ where $%
\mathbf{\eta }$ is a large, non-Euclidean, parameter space. Our goal is to
estimate the value taken by a scalar parameter $\chi \left( \eta \right) $
at $P$. Suppose $O$ includes a vector $Z$ with sample space $\mathcal{Z}$ $%
\subset R^{d}.$ We are interested in parameters $\chi \left( \eta \right) $
which cannot be estimated without estimating some unknown function of the
covariates $Z,$ such as a conditional mean given $Z$ or a density of $Z.$

Given an initial estimator $\widehat{\eta }$, the plug-in estimator $\chi \left( \widehat{%
\eta }\right) $ is a natural choice for estimating $\chi \left( \eta \right)
.$ However, except for special estimators $\widehat{\eta }$ targeted to
specific parameters $\chi \left( \eta \right) $, $\chi \left( \widehat{\eta }%
\right) $ is not $\sqrt{n}$- consistent. A strategy for reducing the bias of 
$\chi \left( \widehat{\eta }\right) $ is to subtract from it an estimate $-%
\mathbb{P}_{n}\chi _{\widehat{\eta }}^{1}$ of its first order bias where for each $\eta ,$ $\chi _{\eta }^{1}\equiv \chi
_{\eta }^{1}\left( O\right) $ is an adequately chosen random variable and $%
\mathbb{P}_{n}h$ is the empirical mean operator $n^{-1}\sum_{i=1}^{n}h\left(
O_{i}\right) $. This strategy yields the one step estimator $\widehat{\chi }%
\equiv \chi \left( \widehat{\eta }\right) +\mathbb{P}_{n}\chi _{\widehat{%
\eta }}^{1}.$ A good choice for $\chi _{\eta }^{1}$ is a so called influence
function of $\chi \left( \eta \right)$. See for example \cite{neweystep1}, \cite{neweystep2} and \cite{Robins17}. Heuristically, this choice is
guided by the following analysis. Write 
\begin{equation*}
\sqrt{n}\left\{ \widehat{\chi }-\chi \left( \eta \right) \right\} =\sqrt{n}%
\left\{ \chi \left( \widehat{\eta }\right) -\chi \left( \eta \right)
+E_{\eta }\left( \chi _{\widehat{\eta }}^{1}\right) \right\} +\mathbb{G}%
_{n}\left( \chi _{\widehat{\eta }}^{1}-\chi _{\eta }^{1}\right) +\mathbb{G}%
_{n}\left( \chi _{\eta }^{1}\right) .
\end{equation*}%
where $E_{\eta }\left( \chi _{\widehat{\eta }}^{1}\right) \equiv \int \chi _{%
\widehat{\eta }}^{1}\left( o\right) dP_{\eta }\left( o\right) $ and $\mathbb{%
G}_{n}\left( \chi _{\widehat{\eta }}^{1}\right) \equiv $ $\sqrt{n}\mathbb{P}%
_{n}\left\{ \chi _{\widehat{\eta }}^{1}-E_{\eta }\left( \chi _{\widehat{\eta 
}}^{1}\right) \right\} .$

The term $\mathbb{G}_{n}\left( \chi _{\eta }^{1}\right) $ is $\sqrt{n}$
times the sample average of mean zero random variables, so it converges to a
normal distribution. On the other hand, if model $\mathcal{M}$ is not too
big, then for estimators $\widehat{\eta }$ converging to $\eta $ one may
expect $\mathbb{G}_{n}\left( \chi _{\widehat{\eta }}^{1}-\chi _{\eta
}^{1}\right) $ to be $o_{p}\left( 1\right) $. One can make this term $%
o_{p}\left( 1\right) $ even without restrictions on model size by employing
the following strategy known as cross-fitting \citep{schick1986, vaart-book98, ayyagariphd, zheng2011cross} . First split sample $%
\mathcal{D}_{n}$ into two samples, next compute $\widehat{\eta }$ from one
subsample and the one step estimator from the other subsample. Next, compute
a second one step estimator by repeating the procedure but switching the
roles of the two subsamples. Finally, compute the estimator $\widetilde{\chi 
}$ of $\chi \left( \eta \right) $ as the average of both one step
estimators. Convergence of $\sqrt{n}\left\{ \widetilde{\chi }-\chi \left(
\eta \right) \right\} $ to a mean zero normal distribution thus depends
essentially solely on 
\begin{equation}
\chi \left( \widehat{\eta }\right) -\chi \left( \eta \right) -E_{\eta
}\left( \chi _{\widehat{\eta }}^{1}\right)  \label{eq:expansion}
\end{equation}%
being $o_{p}\left( n^{-1/2}\right) .$ This last requirement suggests that we choose $\chi _{\eta }^{1}$ to
be an influence function of $\chi \left( \eta \right) $. This is because for
such choice $E_{\eta }\left( \chi _{\widehat{\eta }}^{1}\right) $
acts like minus the derivative of $\chi \left( \eta
\right) $ in the direction $\widehat{\eta }-\eta $. Consequently $\left( %
\ref{eq:expansion}\right) $ acts like the residual from a first order
Taylor's expansion of $\chi \left( \eta \right) ,$ and hence is of order $%
O\left( \left\Vert \widehat{\eta }-\eta \right\Vert ^{2}\right) .$ Thus, for
estimators $\widehat{\eta }$ such that $n^{1/4}\left\Vert \widehat{\eta }%
-\eta \right\Vert =o_{p}\left( 1\right) $ for some norm$\left\Vert \cdot
\right\Vert ,$ $\left( \ref{eq:expansion}\right) $ should be of order $%
o_{p}\left( n^{-1/2}\right) .$ See Chapter 25 in \cite{vaart-book98} for the
definition of influence functions. Parameters that admit influence functions
are called regular parameters. Such parameters have a unique influence
function if the model $\mathcal{M}$ is non-parametric. By non-parametric we
mean that the closed linear span of the scores for all parametric submodels
at $P$ of model $\mathcal{M}$ is equal to $L_{2}\left( P\right) .$
Throughout we will assume that $\mathcal{M}$ is non-parametric, that $\chi
\left( \eta \right) $ is regular and that $\chi _{\eta }^{1}$ is
the unique influence function of $\chi \left( \eta \right) $.

Many parameters $\chi \left( \eta \right) $ of interest in Causal Inference
and Econometrics have influence functions which satisfy the following
property.

\begin{definition}[Mixed bias property]
\label{def:mixed_bias} For each $\eta $\ there exist functions $a\left(
Z\right) \equiv a\left( Z;\eta \right) $ and $b\left( Z\right) \equiv
b\left( Z;\eta \right) $ such that for any $\eta ^{\prime }:$ 
\begin{equation}
\chi \left( \eta ^{\prime }\right) -\chi \left( \eta \right) +E_{\eta
}\left( \chi _{\eta ^{\prime }}^{1}\right) =E_{\eta }\left[ S_{ab}\left\{
a^{\prime }\left( Z\right) -a\left( Z\right) \right\} \left\{ b^{\prime
}\left( Z\right) -b\left( Z\right) \right\} \right]  \label{bias1}
\end{equation}%
where $a^{\prime }\left( Z\right) \equiv a\left( Z;\eta ^{\prime }\right)
,b^{\prime }\left( Z\right) \equiv b\left( Z;\eta ^{\prime }\right) $ and $%
S_{ab}\equiv s_{ab}\left( O\right) $ and $o\rightarrow s_{ab}\left( o\right) 
$ is a known function, i.e. it that does not depend on $\eta .\medskip $
\end{definition}

As we will see in the next section, the mixed bias property implies that $%
\chi \left( \eta \right) +\chi _{\eta }^{1}$ depends on $\eta $ only through 
$a$ and $b$ and, consequently, the one step estimator depends on $\widehat{%
\eta }$ only through estimators $\widehat{a}$ and $\widehat{b}$. The
property implies that for estimators $\widehat{a}$ and $\widehat{b}$
satisfying $\int \left\{ \widehat{a}\left( z\right) -a\left( z\right)
\right\} ^{2}dP_{\eta }\left( z\right) =O_{p}\left( \gamma _{a,n}\right) $
and $\int \left\{ \widehat{b}\left( z\right) -b\left( z\right) \right\}
^{2}dP_{\eta }\left( z\right) =O_{p}\left( \gamma _{b,n}\right) $, equation $%
\left( \ref{eq:expansion}\right) $ is of order $O_{P}\left( \gamma
_{a,n}\gamma _{b,n}\right) .$ This in turn implies that, when cross-fitting
is employed, $\widetilde{\chi }$ has the so-called rate double robustness
property in that $\sqrt{n}\left\{ \widetilde{\chi }-\chi \left( \eta \right)
\right\} $ converges to a mean zero Normal distribution if $\gamma
_{a,n}=o\left( 1\right) ,$ $\gamma _{b,n}=o\left( 1\right) $ and $\gamma
_{a,n}\gamma _{b,n}=o\left( n^{-1/2}\right) .$ Because the rates of
convergence $\gamma _{a,n}$ and $\gamma _{b,n}$ of estimators $\widehat{a}$
and $\widehat{b}$ depend on the complexities of $a$ and $b,$ then for
parameters $\chi \left( \eta \right) $ satisfying the mixed bias property, $%
\sqrt{n}\left\{ \widetilde{\chi }-\chi \left( \eta \right) \right\} $ is
asymptotically normal even if one of the functions $a$ or $b$ is very
complex so long as the other is simple enough.

Recent articles have
identified two distinct classes of parameters $\chi \left( \eta \right) $
with the mixed bias property and gave many examples of parameter of interest in causal inference and econometrics, including the examples in \S\ \ref{ch:examples} below. 
The first class, described in  \cite{Robins08Higher} is comprised of parameters  with influence function of the form 
$
\chi _{\eta }^{1}=S_{ab}a\left( Z\right) b\left( Z\right) +S_{a}a\left(
Z\right) +S_{b}b\left( Z\right) +S_{0}-\chi \left( \eta \right)
$
where $S_{a}$ and $S_{b}$ are statistics. The second class, described in
\cite{NeweyCherno19} (see also \cite%
{hirshberg} and \cite{cherno2}) is comprised of parameters of the form $\chi \left( \eta
\right) =E_{\eta }\left\{ d\left( O,a\right) \right\} $ where $a\left(
Z\right) \equiv E_{\eta }\left( Y|Z\right) $ and $d\left( O,a\right) $ is
such that the map $h\in L_{2}\left( P_{\eta ,Z}\right) \rightarrow E_{\eta
}\left\{ d\left( O,h\right) \right\} $ is continuous and affine linear.


In this paper we will characterize the form of the influence function, under
a non-parametric model $\mathcal{M}$, of any parameter satisfying the mixed
bias property. We will show that the class of parameters satisfying the
mixed bias property strictly includes the union of the \cite{Robins08Higher}
and \cite{NeweyCherno19} classes. Furthermore, we will show that neither the
class \cite{Robins08Higher} nor that of \cite{NeweyCherno19} is contained in
the other. We will also show that, under mild regularity conditions,
parameters that satisfy the mixed bias property are necessarily of the form 
\begin{equation}
\chi \left( \eta \right) =E_{\eta }\left\{ m_{1}\left( O,a\right) \right\}
+E_{\eta }\left( S_{0}\right) =E_{\eta }\left\{ m_{2}\left( O,b\right)
\right\} +E_{\eta }\left( S_{0}\right)   \label{eq:parameter}
\end{equation}%
for some statistic $S_{0}$, and some $m_{1}$ and $m_{2}$ such that the maps $%
h\in \mathcal{A}\rightarrow m_{1}\left( O,h\right) $ and $h\in \mathcal{B}%
\rightarrow m_{2}\left( O,h\right) $ are linear, where $\mathcal{A}\equiv
\left\{ a\left( Z;\eta \right) :\eta \in \mathbf{\eta }\right\} $ and $%
\mathcal{B}\equiv \left\{ b\left( Z;\eta \right) :\eta \in \mathbf{\eta }%
\right\} .$ In addition, we will prove a number of results about the
structure of $a$ and/or $b$ in special cases. In particular, we will show
that, under mild regularity conditions, when $a$ does not depend on the
marginal distribution of $Z,$ then, up to regularity conditions, a necessary
and sufficient condition for $\chi \left( \eta \right) $ to have the mixed
bias property is that $\chi \left( \eta \right) =E_{\eta }\left\{
m_{1}\left( O,a\right) \right\} +E_{\eta }\left( S_{0}\right) $ for a
statistic $S_{0},$ a linear map $h\in \mathcal{A}\rightarrow m_{1}\left(
O,h\right) $ and $a\left( Z\right) $ a ratio of two conditional means given $%
Z.$ We will also show that for parameters $\chi \left( \eta \right) $ that
satisfy the mixed bias property the influence function naturally yields two
loss functions whose expectations are minimized at $a$ and $b$ respectively.
These loss functions can then be used to construct loss-based
machine-learning estimators of $a$ and $b$ such as support vector machine
estimators \citep{svm}.

Our work is related to \cite{robinscomment} and \cite{local-dr}. These papers discuss sufficient
conditions for the existence of, so called, doubly robust estimating
functions. A key distinction of our work is that, unlike these papers, we do
not assume that the parameter solves a population moment equation, rather we
deduce this fact from the primitive condition of the mixed bias property.

\section{Characterization of the influence functions with the mixed bias
property}

Our first result establishes that for parameters $\chi \left( \eta \right) $ that satisfy the mixed bias
property, $\chi \left( \eta \right) +\chi _{\eta }^{1}$ depends on $\eta $
only through $a$ and $b.$

\begin{proposition}
\label{prop:0} If $\chi \left(
\eta \right) $ satisfies the mixed bias property and the regularity
Condition \ref{cond:R1}, then $\chi
\left( \eta \right) +\chi _{\eta }^{1}$ $\ $depends on $\eta $ only through $%
a$ and $b.$
\end{proposition}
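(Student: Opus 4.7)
The goal is to show that whenever two parameter values $\eta_{1},\eta_{2}\in \mathbf{\eta }$ satisfy $a(\cdot ;\eta _{1})=a(\cdot ;\eta _{2})$ and $b(\cdot ;\eta _{1})=b(\cdot ;\eta _{2})$ (almost surely), one has $\chi (\eta _{1})+\chi _{\eta _{1}}^{1}=\chi (\eta _{2})+\chi _{\eta _{2}}^{1}$. My plan is to pin this equality down by exploiting the mixed bias identity twice and then using the richness of the nonparametric model, presumably encoded in the unstated regularity Condition \ref{cond:R1}, to upgrade an equality of expectations into an almost sure equality of random variables.

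Concretely, I would fix an arbitrary $\eta \in \mathbf{\eta }$ and invoke the mixed bias property (\ref{bias1}) first with $\eta ^{\prime }=\eta _{1}$ and then with $\eta ^{\prime }=\eta _{2}$. Because $S_{ab}$ does not depend on $\eta $ and the factors $a(\cdot ;\eta _{j})-a(\cdot ;\eta )$ and $b(\cdot ;\eta _{j})-b(\cdot ;\eta )$ coincide for $j=1,2$ by hypothesis, the right-hand side of (\ref{bias1}) is the same in both instances. Subtracting the two identities eliminates this common term and yields
\begin{equation*}
\bigl[\chi (\eta _{1})-\chi (\eta _{2})\bigr]+E_{\eta }\!\left( \chi _{\eta _{1}}^{1}-\chi _{\eta _{2}}^{1}\right) =0,
\end{equation*}
which can be rewritten as
\begin{equation*}
E_{\eta }\!\Bigl\{\bigl[\chi (\eta _{1})+\chi _{\eta _{1}}^{1}\bigr]-\bigl[\chi (\eta _{2})+\chi _{\eta _{2}}^{1}\bigr]\Bigr\}=0\quad \text{for every }\eta \in \mathbf{\eta }.
\end{equation*}

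The last step is to convert this statement about expectations under every law in the model into an almost sure equality of the two functions of $O$ inside the braces. Since $\mathcal{M}$ is nonparametric, the family $\{P_{\eta }:\eta \in \mathbf{\eta }\}$ is rich enough that only the zero function (up to $P$-negligible sets) has zero mean under every member of the model; this is precisely the kind of completeness/richness property I expect Condition \ref{cond:R1} to supply. Invoking it gives $\chi (\eta _{1})+\chi _{\eta _{1}}^{1}=\chi (\eta _{2})+\chi _{\eta _{2}}^{1}$ $P$-almost surely, which is the desired conclusion.

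The only delicate point is the final step: verifying that Condition \ref{cond:R1} is strong enough to rule out nontrivial functions with mean zero under every $P_{\eta }$ in the model, and that the difference of one-step integrands is in the class of functions covered by that condition (for instance, that it lies in $L_{1}(P_{\eta })$ for each $\eta $ so that the expectations are well defined). Once this completeness-type conclusion is available, the rest of the argument is a two-line manipulation of identity (\ref{bias1}).
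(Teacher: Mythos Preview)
Your argument is correct and more direct than the paper's. One correction: the completeness step at the end is not what Condition~\ref{cond:R1} supplies---that condition only asserts that $\mathcal{A}$ and $\mathcal{B}$ are rich enough to admit small additive perturbations $a+th$, $b+th$. The result you actually need is the paper's Lemma~\ref{lemma:a1}: any $r(O)\in L_2(P_\eta)$ with $E_{\eta'}\{r(O)\}=0$ for every $\eta'$ must vanish a.s.\ $(P_\eta)$, and its proof uses only that the model is nonparametric (via the tilted submodel $p_t=p_\eta(1+tr)$). Since influence functions are in $L_2$, your difference $r=[\chi(\eta_1)+\chi_{\eta_1}^1]-[\chi(\eta_2)+\chi_{\eta_2}^1]$ lies in $L_2(P_{\eta_1})$ and the lemma applies.

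The paper argues differently. It fixes $\eta'$ with $a'=a$ and $b'=b$, adopts a local variation-independent parameterization $\eta=(a,b,\tau)$, and for an arbitrary regular submodel $t\mapsto\eta_t=(a_t,b_t,\tau_t)$ applies the mixed bias identity separately along each coordinate direction---where the right-hand side of~(\ref{bias1}) vanishes because one of the two factors $a'-a$ or $b'-b$ is zero---to obtain $\chi(a_t,b,\tau)=E_{(a_t,b,\tau)}\{\chi(\eta')+\chi_{\eta'}^1\}$ and its analogues. Summing the three coordinate derivatives yields $\left.\tfrac{d}{dt}\chi(\eta_t)\right|_{t=0}=E_\eta\bigl[\{\chi(\eta')+\chi_{\eta'}^1\}\,g\bigr]$ for every score $g$, which exhibits $\chi(\eta')+\chi_{\eta'}^1-\chi(\eta)$ as an influence function at $\eta$; uniqueness of the influence function in a nonparametric model then gives the conclusion. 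Your route bypasses pathwise differentiability and the coordinate decomposition entirely; the paper's route, by contrast, makes explicit where Condition~\ref{cond:R1} enters (to guarantee the coordinate submodels perturbing $a$ and $b$).
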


The Supplementary Material contains proofs of all the claims made in this article, the
regularity Conditions \ref{cond:R1} and \ref{cond:R2} invoked by them, and further examples of parameters with the mixed bias property. The
next Theorem characterizes the influence functions of parameters with the
mixed bias property.

\begin{theorem}
\label{theo:1}If $\chi \left( \eta
\right) $ satisfies the mixed bias property and the regularity Condition \ref%
{cond:R1} holds, then there exist a statistic $S_{0}$ and maps $h\in 
\mathcal{A}\rightarrow $ $m_{1}\left( O,h\right) $ and $h\in \mathcal{B}%
\rightarrow m_{2}\left( O,h\right) $, independent of $\eta$, such that the maps $h\in \mathcal{A}%
\rightarrow E_{\eta }\left\{ m_{1}\left( O,h\right) \right\} $ and $h\in 
\mathcal{B}\rightarrow E_{\eta }\left\{ m_{2}\left( O,h\right) \right\} $
are linear and such that $\left( \ref{eq:parameter}\right) $ and 
\begin{equation}
\chi _{\eta }^{1}=S_{ab}a\left( Z\right) b\left( Z\right) +m_{1}(O,a) +m_{2}(O,b) +S_{0}-\chi \left( \eta \right) 
\label{eq:IF}
\end{equation}%
hold. Furthermore, for all $h\in \mathcal{A},$ $E_{\eta }\left\{
S_{ab}hb+m_{1}\left( O,h\right) \right\} =0$ and for all $h\in \mathcal{B},$ 
$E_{\eta }\left\{ S_{ab}ha+m_{2}\left( O,h\right) \right\} =0.$ In addition,
for any $h_{1},h_{2}\in \mathcal{A}$ and constants $\alpha _{1},\alpha _{2}$
such that $h\equiv \alpha _{1}h_{1}+\alpha _{2}h_{2}\in \mathcal{A}$ and
such that $m_{1}\left( O,h\right) ,m_{1}\left( O,h_{1}\right) \,$\ and $%
m_{1}\left( O,h_{2}\right) $ are in $L_{2}\left( P_{\eta }\right) $, it
holds that $m_{1}\left( O,h\right) =\alpha _{1}m_{1}\left( O,h_{1}\right)
+\alpha _{2}m_{1}\left( O,h_{2}\right) $ a.s.$\left( P_{\eta }\right) .$ In
particular, if for all $h\in \mathcal{A}$, $m_{1}\left( O,h\right) \in
L_{2}\left( P_{\eta }\right) $ then the map $h\in \mathcal{A}\rightarrow $ $%
m_{1}\left( O,h\right) $ is linear a.s.$\left( P_{\eta }\right) $. Likewise,
if for all $h\in \mathcal{B}$ it holds that $m_{2}\left( O,h\right) \in
L_{2}\left( P_{\eta }\right) $ then the map $h\in \mathcal{B}\rightarrow $ $%
m_{2}\left( O,h\right) $ is linear a.s.$\left( P_{\eta }\right) $.
\end{theorem}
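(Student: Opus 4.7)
My plan is to combine Proposition~\ref{prop:0} with the mixed bias identity to extract the desired additive decomposition of $\chi_{\eta}^{1}$. By Proposition~\ref{prop:0}, $\Phi(O;a',b'):=\chi(\eta')+\chi_{\eta'}^{1}(O)$ is well defined as a function of $O$ indexed only by the nuisance pair $(a',b')$. I would rewrite the mixed bias property $\left(\ref{bias1}\right)$ as $E_{\eta}\{\Phi(O;a',b')\}=\chi(\eta)+E_{\eta}[S_{ab}(a'-a)(b'-b)]$ and strip off the bilinear term: setting $R(O;a',b'):=\Phi(O;a',b')-S_{ab}a'(Z)b'(Z)$ gives
$$E_{\eta}\{R(O;a',b')\}=\chi(\eta)+E_{\eta}[S_{ab}ab]-E_{\eta}[S_{ab}a'b]-E_{\eta}[S_{ab}ab'],$$
which is additively separable in $a'$ and $b'$. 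Consequently the mixed second difference
$$\Delta(O):=R(O;a'_{1},b'_{1})-R(O;a'_{1},b'_{2})-R(O;a'_{2},b'_{1})+R(O;a'_{2},b'_{2})$$
satisfies $E_{\eta}\{\Delta(O)\}=0$ for every $\eta\in\mathbf{\eta}$.

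The pivotal step is to upgrade this expectation identity to a pointwise one. Because $\Delta$ is built from $\Phi$ and $S_{ab}$ it carries no explicit $\eta$-dependence, and since $\mathcal{M}$ is non-parametric, any function of $O$ whose $P_{\eta}$-mean vanishes for every $\eta$ must itself vanish almost surely; Condition~\ref{cond:R1} is what I expect to supply the required measurability and integrability to run this argument. Once $\Delta=0$ a.s., I would fix a reference $(a_{0},b_{0})\in\mathcal{A}\times\mathcal{B}$ and set $(a'_{2},b'_{2})=(a_{0},b_{0})$ in the vanishing second difference to conclude
$$R(O;a',b')=m_{1}(O,a')+m_{2}(O,b')+S_{0}(O),$$
where $m_{1}(O,h):=R(O;h,b_{0})-R(O;a_{0},b_{0})$, $m_{2}(O,h):=R(O;a_{0},h)-R(O;a_{0},b_{0})$, and $S_{0}(O):=R(O;a_{0},b_{0})$. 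Rearranging reproduces the claimed form $\left(\ref{eq:IF}\right)$ of $\chi_{\eta}^{1}$.

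Computing $E_{\eta}\{m_{1}(O,h)\}=E_{\eta}[S_{ab}(a_{0}-h)b]$ and its analogue for $m_{2}$ gives linearity in $h$ at the expectation level, provided the reference is chosen so that $E_{\eta}[S_{ab}a_{0}b]=0$ and $E_{\eta}[S_{ab}ab_{0}]=0$ for every $\eta$; in the non-parametric setting this essentially forces $a_{0}=0\in\mathcal{A}$ and $b_{0}=0\in\mathcal{B}$, a structural requirement I expect is built into Condition~\ref{cond:R1}. With this choice, the orthogonality $E_{\eta}\{S_{ab}hb+m_{1}(O,h)\}=0$ is immediate, and both identities of $\left(\ref{eq:parameter}\right)$ follow by taking $E_{\eta}$ in the expression for $\chi_{\eta}^{1}$, using $E_{\eta}\{\chi_{\eta}^{1}\}=0$, and applying the orthogonality at $h=a$ (respectively $h=b$) to cancel $E_{\eta}[S_{ab}ab]$ against $E_{\eta}[m_{1}(O,a)]$ (respectively $E_{\eta}[m_{2}(O,b)]$). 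The pointwise a.s.\ linearity at the end of the theorem follows from exactly the same mean-zero-implies-zero argument applied to $m_{1}(O,\alpha_{1}h_{1}+\alpha_{2}h_{2})-\alpha_{1}m_{1}(O,h_{1})-\alpha_{2}m_{1}(O,h_{2})$, whose $P_{\eta}$-mean is already zero by the expectation-level linearity.

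The hard part will be the passage from ``$E_{\eta}=0$ for every $\eta$'' to ``$=0$ a.s.''; this is precisely what the non-parametricity of $\mathcal{M}$ is designed to deliver, but it is where the integrability and measurability clauses of Condition~\ref{cond:R1} must really be used, together with the structural choice of a reference $(a_{0},b_{0})$ that ensures $m_{1}$ and $m_{2}$ come out linear rather than merely affine.
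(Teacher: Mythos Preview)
Your overall architecture---use Proposition~\ref{prop:0} to view $\chi(\eta')+\chi_{\eta'}^{1}$ as a function $\Phi(O;a',b')$, subtract $S_{ab}a'b'$, anchor at a reference pair, and invoke a ``mean-zero under every $\eta$ implies zero a.s.'' lemma---matches the paper's strategy closely, and your definitions of $m_{1},m_{2},S_{0}$ are essentially the paper's $m_{1}^{\ast},m_{2}^{\ast},S_{0}^{\ast}$. The genuine gap is your linearization step. You correctly compute that with reference $(a_{0},b_{0})$ the map $h\mapsto E_{\eta}\{m_{1}(O,h)\}$ is only affine, with intercept $E_{\eta}[S_{ab}a_{0}b]$, and you propose to kill this intercept by taking $a_{0}=0\in\mathcal{A}$, $b_{0}=0\in\mathcal{B}$. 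But Condition~\ref{cond:R1} does not assert $0\in\mathcal{A}$ or $0\in\mathcal{B}$, and in key examples this fails: in Example~\ref{ex:MAR}, $\mathcal{B}=\{1/P_{\eta}(D=1\mid Z):\eta\}$ consists of functions bounded below by $1$, so $0\notin\mathcal{B}$. Your proposed fix is therefore unavailable in general.

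The paper's way around this is the missing idea. It picks an arbitrary reference $(a^{\ast},b^{\ast})$, obtains the affine $m_{1}^{\ast}$, and then takes a \emph{second} reference $(a^{\ast\ast},b^{\ast\ast})=(a^{\ast}+\varepsilon a^{\dagger},b^{\ast}+\varepsilon b^{\dagger})$ with $a^{\dagger}\in H_{a}\cap\mathcal{A}$, $b^{\dagger}\in H_{b}\cap\mathcal{B}$ as supplied by Condition~\ref{cond:R1}. Equating the two resulting influence-function decompositions shows that $m_{1}^{\ast\ast}(O,a)-m_{1}^{\ast}(O,a)$ is a statistic $Q_{1}^{\dagger}$ independent of $\eta$, and a direct computation gives $E_{\eta}(Q_{1}^{\dagger})=\varepsilon\,E_{\eta}(S_{ab}a^{\dagger}b)$. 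Thus the troublesome affine intercept $E_{\eta}(S_{ab}a^{\dagger}b)$ is itself the $P_{\eta}$-mean of the fixed statistic $Q_{1}^{\dagger}/\varepsilon$, so subtracting $Q_{1}^{\dagger}/\varepsilon$ from $m_{1}^{\dagger}$ (built from the reference $a^{\dagger}$) yields a genuinely linear $m_{1}$, with the subtracted piece absorbed into $S_{0}$. This two-reference comparison is what Condition~\ref{cond:R1} is actually designed to enable, not the existence of a zero element.

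A secondary point: your pointwise second-difference argument for the decomposition requires $\Delta\in L_{2}(P_{\eta})$ to apply the mean-zero-implies-zero lemma, and Condition~\ref{cond:R1} does not obviously supply this (it controls $E_{\eta}|S_{ab}a'b'|$ but says nothing about $\Phi$ under a law $P_{\eta}$ different from the one at which $\chi_{\eta'}^{1}$ is an influence function). The paper avoids this by establishing the decomposition~$\left(\ref{eq:IF}\right)$ not through a pointwise second-difference identity but by directly verifying that the candidate $S_{ab}ab+m_{1}^{\ast}(O,a)+m_{2}^{\ast}(O,b)+S_{0}^{\ast}-\chi(\eta)$ is \emph{the} influence function: it has $P_{\eta}$-mean zero and its $P_{\eta}$-mean is stationary along any submodel $t\mapsto(a_{t},b_{t})$, which are exactly the defining properties. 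The mean-zero-implies-zero lemma is reserved for the final a.s.\ linearity claim, where the $L_{2}$ hypothesis is explicitly assumed in the theorem statement.
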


Part (i) of the next result establishes that under a slightly stronger
requirement on $m_{1}$ and $m_{2}$ and some regularity conditions, the
reverse of Theorem \ref{theo:1} also holds. The theorem also establishes
several additional results that we will comment after its statement.

\begin{theorem}
\label{theo:2}Suppose that for each 
$\eta $\ there exist functions $a\left( Z\right) \equiv a\left( Z;\eta
\right) $ and $b\left( Z\right) \equiv b\left( Z;\eta \right) $ such that
the regularity Condition \ref{cond:R1} holds and such that the influence function of $\chi \left( \eta \right) $ is
of the form $\left( \ref{eq:IF}\right) $ for $m_{1}$ and $m_{2}$ that
satisfy that for each $\eta ,$ the maps 
\begin{equation*}
h\in L_{2}\left( P_{\eta ,Z}\right) \rightarrow E_{\eta }\left\{ m_{1}\left(
O,h\right) \right\} \text{ and }h\in L_{2}\left( P_{\eta ,Z}\right)
\rightarrow E_{\eta }\left\{ m_{2}\left( O,h\right) \right\} 
\end{equation*}%
are continuous and linear with Riesz representers $\mathcal{R}_{1}\left(
Z\right) $ and $\mathcal{R}_{2}\left( Z\right) $ respectively. Moreover,
suppose $E_{\eta }\left\{ m_{1}\left( O,a\right) \right\} $ and $E_{\eta
}\left\{ m_{2}\left( O,b\right) \right\} $ exist. Furthermore, suppose that
for each $\eta ,$ $E_{\eta }\left( S_{ab}|Z\right) a\left( Z\right) $ and $%
E_{\eta }\left( S_{ab}|Z\right) b\left( Z\right) $ are in $L_{2}\left(
P_{\eta ,Z}\right) $. Then,\newline
(i) the identity $\left( \ref{bias1}\right) $ holds for each $\eta ^{\prime }
$ such that $a^{\prime }\left( Z\right) \equiv a\left( Z;\eta ^{\prime
}\right) ,b^{\prime }\left( Z\right) \equiv b\left( Z;\eta ^{\prime }\right) 
$ satisfy that $a^{\prime }-a\in L_{2}\left( P_{\eta ,Z}\right) $ and $%
b^{\prime }-b\in L_{2}\left( P_{\eta ,Z}\right) $. \newline
(ii) for all $h\in L_{2}\left( P_{\eta ,Z}\right) $ it holds that $E_{\eta
}\left\{ S_{ab}ha+m_{2}\left( O,h\right) \right\} =0$ and $E_{\eta }\left\{
S_{ab}hb+m_{1}\left( O,h\right) \right\} =0$. \newline
(iii) if $E_{\eta }\left( S_{ab}|Z\right) \not=0$ a.s.$\left( P_{\eta
,Z}\right) ,$ then $a\left( Z\right) =-\mathcal{R}_{2}\left( Z\right)
/E_{\eta }\left( S_{ab}|Z\right) .$ Likewise, if $E_{\eta }\left(
S_{ab}|Z\right) \not=0$ a.s.$\left( P_{\eta ,Z}\right) ,$ then $b\left(
Z\right) =-\mathcal{R}_{1}\left( Z\right) /E_{\eta }\left( S_{ab}|Z\right)
.$\newline
(iv)\newline
(iv.a) if $a\in $ $L_{2}\left( P_{\eta ,Z}\right) $ or if $\ \exists
\varepsilon >0$ such that $\left( 1+t\right) a\in \mathcal{A}$ for $%
0<t<\varepsilon $ or for $-\varepsilon <t<0$ then $\chi \left( \eta \right)
=E_{\eta }\left\{ m_{2}\left( O,b\right) \right\} +E_{\eta }\left(
S_{0}\right) .$ \newline
(iv.b) Likewise, if $b\in $ $L_{2}\left( P_{\eta ,Z}\right) $ or if $\
\exists \varepsilon >0$ such that $\left( 1+t\right) b\in \mathcal{B}$ for $%
0<t<\varepsilon $ or for $-\varepsilon <t<0$ then $\chi \left( \eta \right)
=E_{\eta }\left\{ m_{1}\left( O,a\right) \right\} +E_{\eta }\left(
S_{0}\right) .$\newline
(iv.c) if the conditions of parts (iv.a) and (iv.b) hold then $\chi \left(
\eta \right) =-E_{\eta }\left( S_{ab}ab\right) +E_{\eta }\left( S_{0}\right)
.$ \newline
(v) if $a\in $ $L_{2}\left( P_{\eta ,Z}\right) ,$ $b\in $ $L_{2}\left(
P_{\eta ,Z}\right) $ and $E_{\eta }\left( S_{ab}|Z\right) >0$ a.s.$\left(
P_{\eta ,Z}\right) ,$ then  
\begin{equation*}
a=\arg \min_{h\in L_{2}\left( P_{\eta ,Z}\right) }E_{\eta }\left\{ S_{ab}%
\frac{h^{2}}{2}+m_{2}\left( O,h\right) \right\} \text{ and }b=\arg
\min_{h\in L_{2}\left( P_{\eta ,Z}\right) }E_{\eta }\left\{ S_{ab}\frac{h^{2}%
}{2}+m_{1}\left( O,h\right) \right\} 
\end{equation*}
\end{theorem}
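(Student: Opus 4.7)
The plan hinges on one algebraic consequence of the hypothesis: since $\chi_{\eta}^{1}$ is the influence function, $E_{\eta}(\chi_{\eta}^{1})=0$, so \eqref{eq:IF} immediately gives the representation $\chi(\eta)=E_{\eta}\{S_{ab}ab+m_{1}(O,a)+m_{2}(O,b)+S_{0}\}$. Combined with the linearity of $h\mapsto E_{\eta}\{m_{j}(O,h)\}$, this identity drives all five parts. The natural order is to establish part (ii) first and then deduce part (i) and the remaining parts algebraically.

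The main obstacle, and the only step that uses more than algebra together with the representation above, is (ii). I would establish it by comparing two expressions for the pathwise derivative of $\chi$ along a one-parameter regular submodel $\{P_{\eta_{t}}\}$ with $\eta_{0}=\eta$ and score $s\in L_{2}(P_{\eta})$. The influence-function property gives $\frac{d}{dt}\chi(\eta_{t})|_{t=0}=E_{\eta}(\chi_{\eta}^{1}s)$, while differentiating $\chi(\eta_{t})=E_{\eta_{t}}\{S_{ab}a_{t}b_{t}+m_{1}(O,a_{t})+m_{2}(O,b_{t})+S_{0}\}$ splits into a measure-derivative piece equal to $E_{\eta}[\{S_{ab}ab+m_{1}(O,a)+m_{2}(O,b)+S_{0}\}s]$ and a functional-derivative piece equal to $E_{\eta}\{S_{ab}(\dot{a}b+a\dot{b})+m_{1}(O,\dot{a})+m_{2}(O,\dot{b})\}$. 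Cancelling the measure pieces leaves $E_{\eta}\{S_{ab}(b\dot{a}+a\dot{b})+m_{1}(O,\dot{a})+m_{2}(O,\dot{b})\}=0$. Invoking regularity Condition \ref{cond:R1} to realize perturbations with $(\dot{a},\dot{b})=(h,0)$ and $(\dot{a},\dot{b})=(0,h)$ for arbitrary $h\in L_{2}(P_{\eta,Z})$ then yields the two identities in (ii); this submodel construction, which needs R1 to decouple perturbations of $a$ from those of $b$, is the hard part.

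With (ii) in hand, (i) is algebraic: starting from $\chi(\eta')-\chi(\eta)+E_{\eta}(\chi_{\eta'}^{1})=E_{\eta}\{S_{ab}(a'b'-ab)\}+E_{\eta}\{m_{1}(O,a'-a)\}+E_{\eta}\{m_{2}(O,b'-b)\}$ (obtained from the representation of $\chi(\eta)$ and linearity of the $m_{j}$), I write $a'b'-ab=(a'-a)(b'-b)+a(b'-b)+b(a'-a)$ and apply (ii) at $h=a'-a$ and at $h=b'-b$, both in $L_{2}(P_{\eta,Z})$ by hypothesis, to kill the two cross terms. For (iii), the Riesz representation of $m_{j}$ together with the tower property turn (ii) into $E_{\eta}\{(E_{\eta}(S_{ab}|Z)a+\mathcal{R}_{2})h\}=0$ for every $h\in L_{2}(P_{\eta,Z})$, forcing $E_{\eta}(S_{ab}|Z)a+\mathcal{R}_{2}=0$ a.s.; the symmetric argument handles $b$.

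For (iv.b) with $b\in L_{2}(P_{\eta,Z})$, setting $h=b$ in (ii) gives $E_{\eta}\{S_{ab}ab+m_{2}(O,b)\}=0$, so these two terms drop out of the representation of $\chi(\eta)$; in the scaling case I instead apply the submodel identity from the second paragraph along the path $(a_{t},b_{t})=(a,(1+t)b)$, which delivers the same identity without requiring $b\in L_{2}$. Part (iv.a) is symmetric, and (iv.c) follows by subtracting the two representations of (iv.a) and (iv.b) from the full one. For (v), the functional $h\mapsto E_{\eta}\{S_{ab}h^{2}/2+m_{2}(O,h)\}$ is well-defined on $L_{2}(P_{\eta,Z})$ and strictly convex under $E_{\eta}(S_{ab}|Z)>0$ a.s.; its Gateaux derivative at $a$ in direction $g$ equals $E_{\eta}\{S_{ab}ag+m_{2}(O,g)\}$, which vanishes by (ii), so $a$ is the unique minimizer, and the $b$ case is symmetric.
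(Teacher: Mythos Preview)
Your proposal is correct and follows essentially the same route as the paper: establish (ii) first via a submodel/pathwise-derivative argument, then derive (i), (iii), (iv), (v) algebraically from (ii) and the representation $\chi(\eta)=E_{\eta}\{S_{ab}ab+m_{1}(O,a)+m_{2}(O,b)+S_{0}\}$.

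Two small remarks. First, Condition~\ref{cond:R1} only guarantees that $a+th\in\mathcal{A}$ for $h$ in a \emph{dense} subset $H_{a}\subset L_{2}(P_{\eta,Z})$, not for arbitrary $h$; the paper makes this explicit, proving (ii) first on $H_{a}$ and then extending to all of $L_{2}(P_{\eta,Z})$ by continuity of $h\mapsto E_{\eta}\{S_{ab}hb+m_{1}(O,h)\}$. You should spell out this density-plus-continuity step. Second, for (v) the paper completes the square to exhibit the minimizer directly, whereas you use strict convexity plus the first-order condition from (ii); both are valid, and your version is arguably cleaner.
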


Note that part (ii) of Theorem \ref{theo:2} provides unbiased moment
equations for $a$ and $b$ respectively without requiring that $a$ or $b$ be
in $L_{2}\left( P_{\eta ,Z}\right) $. \cite{NeweyCherno19} and \cite{ratemodel} exploit these moment equations to construct $\ell _{1}$ regularized estimators of the
nuisance functions. Part (iii) of the theorem
provides the formulae for $a$ and $b$ in terms of the Riesz representers of
the maps. Part (iv) shows that under a strengthening on the requirements on $%
a$ and $b,$ the representation in $\left( \ref{eq:parameter}\right) $ holds.
Note that the requirement that $\left( 1+t\right) b\in \mathcal{B}$ for $%
0<t<\varepsilon $ is rather mild. For instance, for $b\left( Z\right)
=1/P\left( D=1|Z\right) ,\,$as in example \ref{ex:MAR} below, the
requirement is satisfied since the only restriction\thinspace\ the elements $%
b^{\prime }$ of $\mathcal{B}$ satisfy is that for each $z,$ $b^{\prime
}\left( z\right) $ must be greater than or equal 1. Part (v) of the Theorem
could in principle be used to derive other machine learning, loss-based
estimators of these parameters, such as support vector machines \citep{svm}.

\section{Characterization of the nuisance functions}

An interesting question is what can be said about the restrictions that the
nuisance functions $a$ and $b$ of parameters with the mixed bias property
must satisfy. In this section we explore this question in the special case
in which $a$ does not depend on the marginal law of $Z$. We will show that
such $a$ must be a ratio of conditional expectations given $Z.$

\begin{proposition}
\label{prop:2} Suppose that the
parameter $\chi \left( \eta \right) $ satisfies the mixed bias property, the
regularity Conditions \ref{cond:R1} and \ref{cond:R2} hold and $E_{\eta }\left( S_{ab}|Z\right) \not=0$ a.s. $\left(
P_{\eta ,Z}\right) .$ If $a$ depends on $\eta $ only through the law of $O|Z,
$ then there exists a statistic $q\left( O\right) $ such that $a\left(
Z\right) =-$ $E_{\eta }\left\{ q\left( O\right) |Z\right\} /E_{\eta }\left(
S_{ab}|Z\right) $. Furthermore, the influence function of $\chi \left( \eta
\right) $ satisfies $\left( \ref{eq:IF}\right) $ for some linear map $h\in 
\mathcal{A}\rightarrow m_{1}\left( O,h\right) $ and $m_{2}\left( O,b\right)
=q\left( O\right) b.$
\end{proposition}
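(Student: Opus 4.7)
The plan is to apply Theorem \ref{theo:1} to obtain the decomposition (\ref{eq:IF}) together with the unbiased moment equation $E_\eta\{m_2(O,h)+S_{ab}\,h(Z)\,a(Z)\}=0$ for every $h\in\mathcal{B}$, and then to sharpen this into a pointwise conditional identity by exploiting the hypothesis that $a$ depends on $\eta$ only through the law of $O\mid Z$. The underlying intuition is that this hypothesis allows one to freely perturb the marginal $P_Z$ without disturbing $a$, while Condition \ref{cond:R2} presumably permits both the marginal $P_Z$ and the conditional $P_{O\mid Z}$ to vary over sufficiently rich classes.

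First I fix $h\in\mathcal{B}$ and an arbitrary $\eta_0$ and consider perturbations $\eta$ of $\eta_0$ that alter only $P_Z$. Under such perturbations $a$, $E_{\eta}(S_{ab}\mid Z)$ and $m_2(O,h)$ are unchanged, so the moment equation becomes $\int G(z)\,dP_Z^{(\eta)}(z)=0$ with
\[
G(z)\;=\;E_{\eta_0}\{m_2(O,h)\mid Z=z\}\;+\;E_{\eta_0}(S_{ab}\mid Z=z)\,a(z)\,h(z).
\]
Letting $P_Z^{(\eta)}$ range over the class of marginals allowed by Condition \ref{cond:R2} forces $G\equiv 0$, giving the pointwise identity $E_{\eta}\{m_2(O,h)\mid Z\}=-E_{\eta}(S_{ab}\mid Z)\,a(Z)\,h(Z)$.

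Next I construct $q$: pick any $h_0\in\mathcal{B}$ with $h_0(Z)\ne 0$ a.s.\ (the prototypical $b=1/\pi$ is automatically nonzero; in general, the $L_2$-extension of $m_2$ afforded by Condition \ref{cond:R2} permits the choice $h_0\equiv 1$) and set $q(O):=m_2(O,h_0)/h_0(Z)$, which is a statistic because $m_2$ is $\eta$-independent. Applying the pointwise identity with $h=h_0$ and dividing by $h_0(Z)$ yields $E_\eta\{q(O)\mid Z\}=-E_\eta(S_{ab}\mid Z)\,a(Z)$, which rearranges to $a(Z)=-E_\eta\{q(O)\mid Z\}/E_\eta(S_{ab}\mid Z)$, the first claim. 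For the second claim, observe that the random variable $W_h(O):=m_2(O,h)-q(O)\,h(Z)$ is $\eta$-free, and the pointwise identity combined with the formula just derived for $E_\eta\{q\mid Z\}$ shows $E_\eta\{W_h\mid Z\}=0$ for every $\eta$. Varying $P_{O\mid Z}$ over the class allowed by Condition \ref{cond:R2} and invoking the standard fact that a fixed function of $O$ whose integral vanishes under every such conditional law must itself be zero a.s., I conclude $W_h\equiv 0$, i.e.\ $m_2(O,h)=q(O)\,h(Z)$ a.s. Specializing to $h=b$ gives $m_2(O,b)=q(O)\,b$, and (\ref{eq:IF}) then holds with the same linear map $m_1$ supplied by Theorem \ref{theo:1}.

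The main obstacle is the pair of richness moves — first varying $P_Z$ to turn the integrated moment condition into a $Z$-pointwise identity, and then varying $P_{O\mid Z}$ to turn that conditional identity into an $O$-pointwise one. Both rely on the precise content of Condition \ref{cond:R2}, which is not displayed in the excerpt; any weakness there would degrade the conclusion from pointwise a.s.\ equality to mere equality of conditional means given $Z$, which would be insufficient for the factorization $m_2(O,b)=q(O)\,b$. A secondary but genuine worry is exhibiting a nonzero $h_0\in\mathcal{B}$ without circularity; as noted, the cleanest route is to invoke the $L_2$-extension of $m_2$, after which $h_0\equiv 1$ suffices.
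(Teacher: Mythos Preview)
Your proposal is correct and follows essentially the same route as the paper's proof. Both arguments invoke Theorem~\ref{theo:1} to obtain the moment identity $E_\eta\{S_{ab}ha+m_2(O,h)\}=0$ for $h\in\mathcal{B}$, then freeze the conditional law of $O\mid Z$ and vary the marginal of $Z$ to upgrade this to a pointwise-in-$Z$ identity, define $q(O)=m_2(O,b')/b'(Z)$ for a fixed nonvanishing $b'\in\mathcal{B}$, and finally show $m_2(O,h)-q(O)h=0$ a.s.\ by appealing to the fact that a fixed statistic with mean zero under every $\eta$ must vanish (the paper's Lemma~\ref{lemma:a1}). Two minor points: (i) Condition~\ref{cond:R2} directly \emph{postulates} the existence of a $b'\in\mathcal{B}$ with $b'(Z)\neq 0$ a.s.\ together with the $L_2$-membership you need, so your worry about exhibiting $h_0$ is resolved by hypothesis; (ii) for the factorization step the paper works with the \emph{unconditional} mean $E_\eta\{m_2(O,h)-q(O)h\}=0$ for all $\eta$ and applies Lemma~\ref{lemma:a1} once, which is slightly more direct than your route through conditional mean zero followed by variation of $P_{O\mid Z}$---but the content is the same.
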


\begin{proposition}
\label{prop:3} Suppose that $a\left( Z\right) =-$ $E_{\eta }\left\{ q\left(
O\right) |Z\right\} /E_{\eta }\left( S_{ab}|Z\right) $ is in $L_{2}\left(
P_{\eta ,Z}\right) .$ Suppose also that the map $h\in L_{2}\left( P_{\eta
,Z}\right) \rightarrow E_{\eta }\left\{ m_{1}\left( O,a\right) \right\} $ is
linear and continuous with Riesz representer $\mathcal{R}_{1}\left( Z\right)
.$ Then, $\chi \left( \eta \right) =E_{\eta }\left\{ m_{1}\left( O,a\right)
\right\} +E_{\eta }\left( S_{0}\right) $\ has influence function that
satisfies $\left( \ref{eq:IF}\right) $ with $m_{2}\left( O,b\right)
=q\left( O\right) b$ and $b\left( Z\right) =-{\mathcal{R}_{1}\left( Z\right) 
}/{E_{\eta }\left( S_{ab}|Z\right) }$. In addition, if $E_{\eta }\left\{
q\left( O\right) |Z\right\} \in L_{2}\left( P_{\eta ,Z}\right) $, then $\chi
\left( \eta \right) $ has the mixed bias property.
\end{proposition}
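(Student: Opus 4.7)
The plan is to split the argument into two stages matching the two conclusions. First, I compute the pathwise derivative of $\chi(\eta)=E_\eta\{m_1(O,a)\}+E_\eta(S_0)$ along smooth parametric submodels and read off the influence function, verifying it has the form $(\ref{eq:IF})$ with $m_2(O,b)=q(O)b$ and $b(Z)=-\mathcal{R}_1(Z)/E_\eta(S_{ab}\mid Z)$. Second, under the additional hypothesis $E_\eta\{q(O)\mid Z\}\in L_2(P_{\eta,Z})$, I invoke Theorem~\ref{theo:2}(i) to obtain the mixed-bias identity $(\ref{bias1})$.

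For the first stage, fix a regular parametric submodel $\{P_{\eta_t}\}$ through $P_\eta$ at $t=0$ with score $S=S(O)$, and write $S=S_1+S_2$ with $S_1=S-E_\eta(S\mid Z)$ the conditional score of $O\mid Z$ and $S_2=E_\eta(S\mid Z)$ the marginal score of $Z$. Differentiating under the integral, using linearity of $h\mapsto m_1(O,h)$ and the Riesz representation of $h\mapsto E_\eta\{m_1(O,h)\}$, I obtain
\[
\frac{d}{dt}\chi(\eta_t)\Big|_{t=0}=E_\eta\{m_1(O,a)S\}+E_\eta\{\mathcal{R}_1(Z)\dot a(Z)\}+E_\eta(S_0 S),
\]
where $\dot a(Z)=\partial_t a_{\eta_t}(Z)|_{t=0}$. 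Differentiating the defining ratio $a_{\eta_t}(Z)=-E_{\eta_t}\{q(O)\mid Z\}/E_{\eta_t}(S_{ab}\mid Z)$ and clearing denominators gives
\[
-E_\eta(S_{ab}\mid Z)\,\dot a(Z)=E_\eta\{(q(O)+a(Z)S_{ab})\,S_1\mid Z\}.
\]
Substituting $\mathcal{R}_1(Z)=-b(Z)E_\eta(S_{ab}\mid Z)$, the middle term becomes $E_\eta[b(Z)(q(O)+a(Z)S_{ab})S_1]$. Since $E_\eta\{q(O)+a(Z)S_{ab}\mid Z\}\equiv 0$ by the very definition of $a$, any contribution from $S_2(Z)$ vanishes, so $S_1$ may be replaced by $S$. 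Collecting terms and subtracting $\chi(\eta)E_\eta(S)=0$ yields $\frac{d}{dt}\chi(\eta_t)|_{t=0}=E_\eta[\chi_\eta^1 S]$ with $\chi_\eta^1$ of the form $(\ref{eq:IF})$.

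For the second stage, I verify the hypotheses of Theorem~\ref{theo:2}(i). The map $h\mapsto E_\eta\{m_2(O,h)\}=E_\eta\{E_\eta(q(O)\mid Z)\,h(Z)\}$ is continuous and linear on $L_2(P_{\eta,Z})$ with Riesz representer $\mathcal{R}_2(Z)=E_\eta\{q(O)\mid Z\}$, which lies in $L_2(P_{\eta,Z})$ by the extra hypothesis. Moreover $E_\eta(S_{ab}\mid Z)a(Z)=-E_\eta\{q(O)\mid Z\}\in L_2(P_{\eta,Z})$ and $E_\eta(S_{ab}\mid Z)b(Z)=-\mathcal{R}_1(Z)\in L_2(P_{\eta,Z})$. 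Existence of $E_\eta\{m_1(O,a)\}$ is built into the definition of $\chi(\eta)$, and existence of $E_\eta\{m_2(O,b)\}=E_\eta\{q(O)b(Z)\}$ follows from Cauchy--Schwarz. Theorem~\ref{theo:2}(i) then delivers $(\ref{bias1})$, i.e., the mixed bias property.

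The main obstacle is the first-stage computation: justifying differentiation under the integral using the regularity of Condition~\ref{cond:R1}, carefully tracking the $S=S_1+S_2$ decomposition, and confirming that the marginal-$Z$ score $S_2$ drops out of $E_\eta\{\mathcal{R}_1(Z)\dot a(Z)\}$ via the key orthogonality $E_\eta\{q(O)+a(Z)S_{ab}\mid Z\}=0$ that is forced by the definition of $a$. Once this identity is exploited, the rest is bookkeeping and an appeal to the already-proved Theorem~\ref{theo:2}(i).
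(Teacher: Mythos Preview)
Your proposal is correct and follows essentially the same approach as the paper's proof: compute the pathwise derivative of $\chi(\eta_t)$, use the Riesz representer $\mathcal{R}_1$ to handle the inner variation of $a_t$, differentiate the ratio defining $a$, exploit the identity $E_\eta\{q(O)+a(Z)S_{ab}\mid Z\}=0$ to obtain the form $(\ref{eq:IF})$, and then invoke Theorem~\ref{theo:2} for the mixed bias property. The only cosmetic difference is that you explicitly split the score as $S=S_1+S_2$ to justify replacing $S_1$ by $S$ in the middle term, whereas the paper writes the derivative of a conditional expectation directly with the full score $g$ (which already absorbs this orthogonality) and then observes at the end that $E_\eta[b(Z)\{q(O)+a(Z)S_{ab}\}]=0$ to confirm the candidate influence function is centered; both routes are equivalent.

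One small remark on your Cauchy--Schwarz step: to see that $E_\eta\{m_2(O,b)\}=E_\eta\{q(O)b(Z)\}$ is finite, note $E_\eta\{q(O)b(Z)\}=E_\eta\{E_\eta(q\mid Z)\,b(Z)\}=E_\eta\{a(Z)\mathcal{R}_1(Z)\}$, and now both factors are in $L_2(P_{\eta,Z})$ by hypothesis; it is not immediate that $b$ itself lies in $L_2$, so the rewriting is what makes the Cauchy--Schwarz application work.
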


For a given parameter $\chi \left( \eta \right) $ there can exist more than
one function $a\left( Z\right) \equiv a\left( Z,\eta \right) $ independent
of the law of $Z$ such that the mixed bias property holds for some $b\left(
Z\right) \equiv b\left( Z,\eta \right) .$ An instance is the parameter in
Example \ref{ex:CC} below, since $a\left( Z\right) $ can be either $E_{\eta
}\left( Y|Z\right) $ or $E_{\eta }\left( D|Z\right) .$ However, in that
example, if $a=E_{\eta }\left( Y|Z\right) $ then $b=E_{\eta }\left(
D|Z\right) $ and vice versa, if $a=E_{\eta }\left( D|Z\right) $ then $%
b=E_{\eta }\left( Y|Z\right) .$ An open question is whether or not there
exist two distinct triplets $\left( S_{ab},a,b\right) $ and $\left(
S_{ab}^{\ast },a^{\ast },b^{\ast }\right) $ with $\left( a,b\right)
\not=\left( a^{\ast },b^{\ast }\right) $ such that the parameter $\chi
\left( \eta \right) \,\,$satisfies the mixed bias property for both
triplets. This is important because if such distinct triplets existed, then
there would exist two different pairs of nuisance functions of the same
covariate $Z$ that one could choose to estimate in order to construct rate
doubly robust estimators of $\chi \left( \eta \right) $.

In the preceding propositions we have assumed a given partition of the data $%
O$ into a given `covariate' vector $Z$ and the remaining variables in $O.$
Interestingly, there exist parameters $\chi \left( \eta \right) $ that
satisfy the mixed bias property for two different partitions of $O,$ one
with `covariate' vector $Z$ and another with a different `covariate' vector $%
Z^{\ast }.$ Specifically, in Example \ref{ex:MAR} we show that for $\chi
\left( \eta \right) $ equal to the mean of an outcome missing at random,
there exist two possible partitions of $O,$ into two different `covariate'
vectors $Z$ and $Z^{\ast },$ and functions $a^{\ast }\left( Z^{\ast },\eta
\right) $ and $b^{\ast }\left( Z^{\ast },\eta \right) $ different from $%
a\left( Z\right) $ and $b\left( Z\right) ,$ $a\left( Z,\eta \right) $ and $%
a^{\ast }\left( Z^{\ast },\eta \right) $ depending on $\eta $ only through
the law of $O|Z$ such that for all $\eta $ and $\eta ^{\prime }$ 
\begin{equation*}
S_{ab}\left\{ a\left( Z,\eta \right) -a\left( Z,\eta ^{\prime }\right)
\right\} \left\{ b\left( Z,\eta \right) -b\left( Z,\eta ^{\prime }\right)
\right\} =S_{ab}^{\ast }\left\{ a^{\ast }\left( Z^{\ast },\eta \right)
-a^{\ast }\left( Z^{\ast },\eta ^{\prime }\right) \right\} \left\{ b^{\ast
}\left( Z^{\ast },\eta \right) -b^{\ast }\left( Z^{\ast },\eta ^{\prime
}\right) \right\} .
\end{equation*}
Consequently, the parameter $\chi \left( \eta \right) $ satisfies the mixed
bias property for the functions $a$ and $b,$ but also for the functions $%
a^{\ast }$ and $b^{\ast }.$ In this example, $S_{ab}$ is not a constant but $%
S_{ab}^{\ast }$ is a constant, so in view of part (i) of Proposition \ref%
{prop:2}, $a\left( Z\right) $ is a ratio of two conditional expectations
given $Z,$ whereas $a^{\ast }$ is a conditional expectation of a specific
statistic $q\left( O\right) $ given $Z^{\ast }.$ This example raises the
following interesting question: suppose that $\chi \left( \eta \right) $
satisfies the mixed bias property for a function $a\left( Z\right) $ that is
a strict ratio of two conditional expectations given $Z,$ is it always
possible to find a different covariate vector $Z^{\ast }$ such that $\chi
\left( \eta \right) $ satisfies the mixed bias property for a function $%
a^{\ast }\left( Z^{\ast }\right) $ that is a conditional mean of a statistic
given $Z^{\ast }?$ The answer  is negative, as Example \ref%
{ex:MNAR} below illustrates. This example proves that the class of
parameters that satisfy the mixed bias property strictly includes the class
considered in \cite{NeweyCherno19}.

We conclude our analysis answering the question of whether a
characterization exist of nuisance functions that depend on the marginal law
of $Z$ and possibly also on the law of $O|Z$. The answer to this question is
negative. This can be understood from Proposition \ref{prop:3} because when
the map $h\in L_{2}\left( P_{\eta ,Z}\right) \rightarrow E_{\eta }\left\{
m_{1}\left( O,h\right) \right\} $ is linear and continuous, $b\left(
Z\right) =-{\mathcal{R}_{1}\left( Z\right) }/{E_{\eta }\left(
S_{ab}|Z\right) }$ where $\mathcal{R}_{1}\left( Z\right) $ is the Riesz
representer of the map. The representer $\mathcal{R}_{1}\left( Z\right) $
can be many different functionals of the marginal law of $Z,$ depending on
the map it represents. The examples in the next section illustrate this
point.

\section{Examples}

\label{ch:examples}

In this section we present several examples of parameters satisfying the
mixed bias property. These examples demonstrate that the class of parameters
with the mixed bias property strictly includes the classes of \cite%
{NeweyCherno19} and of \cite{Robins08Higher} and that neither of this
classes is included in the other. In the Supplementary Material we provide
further examples.

In the following examples the parameters are, possibly
some function of, parameters that are in both the class of \cite%
{NeweyCherno19} and of \cite{Robins08Higher}

\begin{example}[Mean of an outcome that is missing at random and average
treatment effect]
\label{ex:MAR} Suppose $O=\left( DY,D,Z\right) $ where $D$ is binary, $Y$ is
an outcome which is observed if and only if $D=1$ and $Z$ is a vector of
always observed covariates. If we make the untestable assumption that the
density $p\left( y|D=0,Z\right) $ is equal to the density $p\left(
y|D=1,Z\right) ,$ i.e. that the outcome $Y$ is missing at random then, for $%
P=P_{\eta }$, the mean of $Y$ is equal to $\chi \left( \eta \right) =E_{\eta
}\left\{ a\left( Z\right) \right\} $ where $a\left( Z\right) \equiv E_{\eta
}\left( DY|Z\right) /E_{\eta }\left( D|Z\right) $. If $a\left( Z\right) \in
L_{2}\left( P_{\eta ,Z}\right) $ and $E_{\eta }\left( D|Z\right) >0,$ then
the parameter $\chi \left( \eta \right) $ satisfies the conditions of
Proposition \ref{prop:3} with $m_{1}\left( O,h\right) \equiv h$ and $S_{0}=0.
$ The map $h\in L_{2}\left( P_{\eta ,Z}\right) \rightarrow E_{\eta }\left\{
m_{1}\left( O,h\right) \right\} $ is continuous with Riesz representer $%
\mathcal{R}_{1}\left( Z\right) =1,$ and $a\left( Z\right) =-E_{\eta }\left\{
q\left( O\right) |Z\right\} /E_{\eta }\left( S_{ab}|Z\right) $ for $S_{ab}=-D
$ and $q\left( O\right) =DY.$ Consequently, $\chi \left( \eta \right) $ has
the mixed bias property for $a\left( Z\right) $ as defined and $b\left(
Z\right) =1/E_{\eta }\left( D|Z\right) .$ Since $m_{1}\left( O,a\right) =a$,
Proposition \ref{prop:2} implies that $\chi \left( \eta \right) $ is in the
class of parameters considered by \cite{Robins08Higher}. Interestingly, as
shown in \cite{NeweyCherno19} and anticipated in the previous section, the
parameter $\chi \left( \eta \right) $ is also in the class of \cite%
{NeweyCherno19}, but for a different `covariate' $Z^{\ast }.$ Specifically,
let $Z^{\ast }\equiv $ $\left( D,Z\right) $ and $a^{\ast }\left( Z^{\ast
}\right) \equiv E_{\eta }\left( DY|Z^{\ast }\right) .$ Then, we can
re-express $\chi \left( \eta \right) $ as $\chi \left( \eta \right) =E_{\eta
}\left\{ m_{1}^{\ast }\left( O,a^{\ast }\right) \right\} $ where for any $%
h^{\ast }\left( D,Z\right) ,$ $m_{1}^{\ast }\left( O,h^{\ast }\right) \equiv
h^{\ast }\left( D=1,Z\right) .$ The map $h^{\ast }\in L_{2}\left( P_{\eta
,\left( D,Z\right) }\right) \rightarrow E_{\eta }\left\{ m_{1}^{\ast }\left(
O,h^{\ast }\right) \right\} $ is linear and it is continuous when $E_{\eta
}\left\{ P_{\eta }\left( D=1|Z\right) ^{-1}\right\} <\infty $ and has Riesz
representer $\mathcal{R}_{1}^{\ast }\left( Z^{\ast }\right) =D/E_{\eta
}\left( D|Z^{\ast }\right) .$ Thus, under the latter condition, the
parameter falls in the class of \cite{NeweyCherno19}. Because $a^{\ast
}\left( Z^{\ast }\right) $ is a conditional expectation given $Z^{\ast },$
Proposition \ref{prop:3} implies that $\chi \left( \eta \right) $ has the
mixed bias property for $a^{\ast }\left( Z^{\ast }\right) $ as defined, $%
S_{ab}^{\ast }=1$ and $b^{\ast }\left( Z^{\ast }\right) =D/E_{\eta }\left(
D|Z\right) .$ In the Supplementary Web Appendix we argue that this example
implies that the average treatment effect contrast is a difference of two
parameters, each belonging to both the class of \cite{Robins08Higher} and of 
\cite{NeweyCherno19}.
\end{example}

\begin{example}[Expected conditional covariance]
\label{ex:CC} Let $O=\left( Y,D,Z\right) ,$ where $Y$ and $D$ are real
valued. Let $\chi \left( \eta \right) \equiv E_{\eta }\left\{ cov_{\eta
}\left( D,Y|Z\right) \right\} $ be the expected conditional covariance
between $D$ and $Y$. When $D$ is a binary treatment, $\chi \left( \eta
\right) $ is an important component of the variance weighted average
treatment effect \cite{Robins08Higher}. We can re-write $\chi \left( \eta
\right) =E_{\eta }\left( DY\right) +E_{\eta }\left\{ m_{1}\left( O,a\right)
\right\} $ where $m_{1}\left( O,h\right) \equiv -Dh,$ $a\left( Z\right)
\equiv E_{\eta }\left( Y|Z\right) =-$ $E_{\eta }\left\{ q\left( O\right)
|Z\right\} /E\left( S_{ab}|Z\right) $ with $q\left( O\right) =Y$ and $%
S_{ab}=-1.$ Then $\chi \left( \eta \right) $ has the mixed bias property
with $a\left( Z\right) $ as defined and $b\left( Z\right) =\mathcal{R}%
_{1}\left( Z\right) =-E_{\eta }\left( D|L\right) $ the Riesz representer of
the map $h\in L_{2}\left( P_{\eta ,Z}\right) \rightarrow E_{\eta }\left\{
m_{1}\left( O,h\right) \right\} $. Thus, $\chi \left( \eta \right) $ is in 
\cite{NeweyCherno19} and in the \cite{Robins08Higher} classes with $S_{a}=-D,
$ $S_{b}=Y$ and $S_{0}=DY.$
\end{example}

The next example gives a parameter that is in the class of \cite%
{Robins08Higher} but not in the class of \cite{NeweyCherno19}.

\begin{example}[Mean of an outcome missing not at random]
\label{ex:MNAR}Suppose $O=\left( DY,D,Z\right) $ where $D$ is binary, $Y
$ is an outcome which is observed if and only if $D=1$ and $Z$ is a vector
of always observed covariates. If we make the untestable assumption that the
density $p\left( y|D=0,Z\right) $ is a known exponential tilt of the density 
$p\left( y|D=1,Z\right) ,$ i.e. 
\begin{equation}
p\left( y|D=0,Z\right) =p\left( y|D=1,Z\right) \exp \left( \delta y\right)
/E\left\{ \exp \left( \delta Y\right) |D=1,Z\right\}   \label{tilt}
\end{equation}%
where $\delta $ is a given constant, then under $P=P_{\eta }$ the mean of $Y$
is $\chi \left( \eta \right) =E_{\eta }\left\{ DY+\left( 1-D\right) a\left(
Z\right) \right\} $ assuming $a\left( Z\right) \equiv E_{\eta }\left\{
DY\exp \left( \delta Y\right) |Z\right\} /E_{\eta }\left\{ D\exp \left(
\delta Y\right) |Z\right\} $ exists. Estimation of $\chi \left( \eta \right) 
$ under different fixed values of $\delta $ has been proposed in the
literature as a way of conducting sensitivity analysis to departures from
the missing at random assumption \citep{scharfstein}. Under the sole
restriction $\left( \ref{tilt}\right) $ the law $P$ of the observed data $O$
is unrestricted, and hence the model for $P$ is non-parametric. If $a\left(
Z\right) \in L_{2}\left( P_{\eta ,Z}\right) $ and $E_{\eta }\left\{ D\exp
\left( \delta Y\right) |Z\right\} >0,$ then the parameter $\psi \left( \eta
\right) \equiv E_{\eta }\left\{ m_{1}\left( O,a\right) \right\} $ with $%
m_{1}\left( O,h\right) \equiv \left( 1-D\right) h$ has the mixed bias
property because it satisfies the conditions of Proposition \ref{prop:3}
with $q\left( O\right) =DY\exp \left( \delta Y\right) ,$ $S_{ab}=-D\exp
\left( \delta Y\right) $ and Riesz representer $\mathcal{R}_{1}\left(
Z\right) =E_{\eta }\left( 1-D|Z\right) $ and $b\left( Z\right) \equiv
-E_{\eta }\left( 1-D|Z\right) /E_{\eta }\left\{ D\exp \left( \delta Y\right)
|Z\right\} .$ Thus, $\chi \left( \eta \right) $ also satisfies the mixed
bias property with $S_{ab},$ $a$ and $b$ as defined\thinspace . The
influence function of $\chi \left( \eta \right) $ was derived in \cite%
{robinscomment} and was shown to be in the \cite{Robins08Higher} class in
that paper.  In the Appendix we show that when $\delta \not=0,$ there exists no linear and continuous map $h^{\ast }\in
L_{2}\left( P_{\eta ,\left( Z\right) }\right) \rightarrow E_{\eta }\left\{
m_{1}^{\ast }\left( O,h^{\ast }\right) \right\} ,$ such that $\psi \left(
\eta \right) =E_{\eta }\left\{ m_{1}^{\ast }\left( O,a^{\ast }\right)
\right\} $ for $a^{\ast }\left( Z\right) =E_{\eta }\left\{ q\left( O\right)
|Z\right\} $ and $q\left( O\right) $ some statistic. 
We also show that there exists no linear and continuous map $h^{\ast }\in
L_{2}\left( P_{\eta ,\left(D, Z\right) }\right) \rightarrow E_{\eta }\left\{
m_{1}^{\ast }\left( O,h^{\ast }\right) \right\} ,$ such that $\psi \left(
\eta \right) =E_{\eta }\left\{ m_{1}^{\ast }\left( O,a^{\ast }\right)
\right\} $ for $a^{\ast }\left(D, Z\right) =E_{\eta }\left\{ q\left( O\right)
|D,Z\right\} $ and $q\left( O\right) $ some statistic. 
This shows that $\psi
\left( \eta \right) ,$ and consequently $\chi \left( \eta \right) ,$ is not
in the class studied in \cite{NeweyCherno19}.
\end{example}

The next example gives a parameter that is in the class of \cite%
{NeweyCherno19} but not in the class of \cite{Robins08Higher}

\begin{example}[Causal effect of a treatment taking values on a continuum]
\label{ex:APE} Let $O=\left( Y,D,L\right) ,$ $Z=\left( D,L\right) ,$where $Y$
and $D$ are real valued, $D$ is a treatment variable taking any value in $%
\left[ 0,1\right] $ and $L$ is a covariate vector. Furthermore, let $Y_{d}$
\ denote the counterfactual outcome under treatment $D=d.$ Assume that $%
E_{\eta }\left( Y_{d}|L\right) =E_{\eta }\left( Y|D=d,L\right) $. The
parameter $\chi \left( \eta \right) \equiv E_{\eta }\left\{ m_{1}\left(
O,a\right) \right\} $ with $a\left( D,L\right) \equiv E_{\eta }\left(
Y|D,L\right) ,m_{1}\left( O,a\right) \equiv \int_{0}^{1}a\left( u,L\right)
w\left( u\right) du$ where $w\left( \cdot \right) $ is a given scalar
function satisfying $\int_{0}^{1}w\left( u\right) du=0$ agrees with the
treatment effect contrast $\int_{0}^{1}E_{\eta }\left( Y_{u}\right) w\left(
u\right) du.$ The map $h\in L_{2}\left( P_{\eta ,\left( D,Z\right) }\right)
\rightarrow E_{\eta }\left\{ m_{1}\left( O,h\right) \right\} $ where $%
m_{1}\left( O,h\right) \equiv \int_{0}^{1}h\left( u,L\right) w\left(
u\right) du$ is continuous if $E_{\eta }\left\{ \left\{ w\left( D\right)
/f\left( D|L\right) \right\} ^{2}\right\} <\infty $ with Riesz representer $%
\mathcal{R}_{1}\left( Z\right) =w\left( D\right) /f\left( D|L\right) .$ In
such case, the parameter $\chi \left( \eta \right) $ is in the class studied
in \cite{NeweyCherno19}. Thus, by Proposition \ref{prop:3} it has the mixed
bias property with $S_{ab}=-1,$ $a$ as defined, and $b\left( Z\right) =%
\mathcal{R}_{1}\left( Z\right) =w\left( D\right) /f\left( D|L\right)$. However, in the Appendix we show that $\chi \left( \eta
\right) $ is not in the class of \cite{Robins08Higher}.
\end{example}

The next example gives a parameter that is in neither the class of \cite%
{NeweyCherno19} nor in the class of \cite{Robins08Higher}

\begin{example}
\label{ex:artificial} The following toy example illustrates that there exist
parameters $\chi \left( \eta \right) $ that have the mixed bias property but
that are in neither the class of \ \cite{NeweyCherno19} nor in the class of 
\cite{Robins08Higher}. Let $O=\left( Y_{1},Y_{2},Z\right) $ for $Y_{1}$ and $%
Y_{2}$ continuous random variables, $Y_{2}>0$ and $Z$ a scalar vector taking
any values in $\left[ 0,1\right] $. The parameter $\chi \left( \eta \right)
\equiv \int_{0}^{1}a\left( z\right) dz$ where $a\left( Z\right) \equiv
E_{\eta }\left( Y_{1}|Z\right) /E_{\eta }\left( Y_{2}|Z\right) $ can be
written as $\chi \left( \eta \right) =E_{\eta }\left\{ m_{1}\left(
O,a\right) \right\} $ where for any $h\left( z\right) ,$ $m_{1}\left(
O,h\right) \equiv \int_{0}^{1}h\left( z\right) dz$ does not depend on $O.$
The map $h\in L_{2}\left( P_{\eta ,Z}\right) \rightarrow E_{\eta }\left\{
m_{1}\left( O,h\right) \right\} $ is linear. It is continuous if $E_{\eta
}\left\{ f\left( Z\right) ^{-2}\right\} <\infty $ and has Riesz representer $%
\mathcal{R}_{1}\left( Z\right) =f\left( Z\right) ^{-1}.$ In such case, by
proposition \ref{prop:3}, $\chi \left( \eta \right) $ satisfies the mixed
bias property with $S_{ab}=-Y_{2},$ $a$ as defined and $b\left( Z\right)
=\left\{ f\left( Z\right) E_{\eta }\left( Y_{2}|Z\right) \right\} ^{-1}.$
However, it can be shown that the parameter is in neither the class studied
in \cite{NeweyCherno19} nor in the class proposed in \cite{Robins08Higher}
\end{example}

\section{Final remarks}

In \S\ $\ref{sec:setup}$ we have argued that parameters with the mixed bias
property admit estimators with the `rate double robustness' property.
However, the class of parameters with the mixed bias property does not
exhaust all parameters that admit rate doubly robust estimators. For
instance, consider $\psi \left( \eta \right) =g\left\{ \chi \left( \eta
\right) \right\} $ for a non-linear continuously differentiable function $g$
and a parameter $\chi \left( \eta \right) $ with the mixed bias property. By
Theorem \ref{theo:1}, the influence function of $\chi \left( \eta \right) $
is of the form $\left( \ref{eq:IF}\right) .$ However, the influence
function of $\psi \left( \eta \right) $ is $\psi _{\eta }^{1}=g^{\prime
}\left\{ \chi \left( \eta \right) \right\} \chi _{\eta }^{1}$ $\ $which is
not of the form $\left( \ref{eq:IF}\right) .$ Thus, by Theorem \ref{theo:2} 
$\psi \left( \eta \right) $ does not have the mixed bias property. Yet, if $%
\widetilde{\chi }$ is the rate doubly robust, cross-fitted, one step
estimator of \S\ $\ref{sec:setup},$ then by the delta method, $\widetilde{%
\psi }=g\left( \widetilde{\chi }\right) $ is a rate doubly robust estimator
of $\psi \left( \eta \right) .$ A characterization of the class of all
parameters that admit rate doubly robust estimators remains an open question.

\newpage

\section{Supplementary Material}

\subsection{Examples}
\begin{example}[Population average treatment effect]
\label{ex:ATE} Suppose that $O=\left( Y,D,Z\right) $ where $D$ is a binary
treatment indicator, $Y$ is an outcome and $Z$ is a baseline covariate
vector. Under the assumption of unconfoundedness given $Z,$ the population
average treatment effect contrast is $ATE\left( \eta \right) \equiv \chi
_{1}\left( \eta \right) -\chi _{2}\left( \eta \right) $ where $\chi
_{1}\left( \eta \right) \equiv E_{\eta }\left\{ a_{1}\left( Z\right)
\right\} $ and $\chi _{2}\left( \eta \right) \equiv E_{\eta }\left\{
a_{2}\left( Z\right) \right\} $ with $a_{1}\left( Z\right) \equiv E_{\eta
}\left( DY|Z\right) /E_{\eta }\left( D|Z\right) $ and $a_{2}\left( Z\right)
\equiv E_{\eta }\left\{ \left( 1-D\right) Y|Z\right\} /E_{\eta }\left\{
\left( 1-D\right) |Z\right\} $. Regarding $1-D\,\ $as another missing data
indicator, example $\left( \ref{ex:MAR}\right) $ implies that $ATE\left(
\eta \right) $ is a difference of two parameters, $\chi _{1}\left( \eta
\right) $ and $\chi _{2}\left( \eta \right) ,$ each in the class of \cite%
{Robins08Higher} and of \cite{NeweyCherno19}.
\end{example}

\begin{example}[Mean of outcome missing at random in the non-respondents]
\label{ex:MAR-non-respondents}With the notation and assumptions of Example %
\ref{ex:MAR}, $E_{\eta }\left\{ \left( 1-D\right) a\left( Z\right) \right\}
/E_{\eta }\left( 1-D\right) $ where again, $a\left( Z\right) \equiv E_{\eta
}\left( DY|Z\right) /E_{\eta }\left( D|Z\right) ,$ is equal to the mean of $Y
$ among the non-respondents, i.e. in the population with $D=0.$ If $a\left(
Z\right) \in L_{2}\left( P_{\eta ,Z}\right) $ and $E_{\eta }\left(
D|Z\right) >0,$ then the parameter $\chi \left( \eta \right) \equiv $ $%
E_{\eta }\left\{ \left( 1-D\right) a\left( Z\right) \right\} $ satisfies the
conditions of \ Proposition \ref{prop:3} with $m_{1}\left( O,h\right) \equiv
\left( 1-D\right) h$ and $S_{0}=0.$ The map $h\in L_{2}\left( P_{\eta
,Z}\right) \rightarrow E_{\eta }\left\{ m_{1}\left( O,h\right) \right\} $ is
continuous with Riesz representer $\mathcal{R}_{1}\left( Z\right) =E_{\eta
}\left\{ \left( 1-D\right) |Z\right\} ,$ and $a\left( Z\right) =-E_{\eta
}\left\{ q\left( O\right) |Z\right\} /E_{\eta }\left( S_{ab}|Z\right) $ for $%
S_{ab}=-D$ and $q\left( O\right) =DY.$ Consequently, $\chi \left( \eta
\right) $ has the mixed bias property for $a\left( Z\right) $ as defined and 
$b\left( Z\right) =E_{\eta }\left\{ \left( 1-D\right) |Z\right\} /E_{\eta
}\left( D|Z\right) .$ Since $m_{1}\left( O,a\right) =\left( 1-D\right) a$,
Proposition $\ref{prop:2}$ implies that $\chi \left( \eta \right) $ is in
the class of parameters considered by \cite{Robins08Higher}. As in example %
\ref{ex:MAR}, the parameter $\chi \left( \eta \right) $ is also in the class
of \cite{NeweyCherno19}, for the different `covariate' $Z^{\ast }\equiv $ $%
\left( D,Z\right) $ and $a^{\ast }\left( Z^{\ast }\right) \equiv E_{\eta
}\left( DY|Z^{\ast }\right) $ since we can re-express $\chi \left( \eta
\right) $ as $\chi \left( \eta \right) =E_{\eta }\left\{ m_{1}^{\ast }\left(
O,a^{\ast }\right) \right\} $ where for any $h^{\ast }\left( D,Z\right) ,$ $%
m_{1}^{\ast }\left( O,h^{\ast }\right) \equiv \left( 1-D\right) h^{\ast
}\left( D=1,Z\right) .$ The map $h^{\ast }\in L_{2}\left( P_{\eta ,\left(
D,Z\right) }\right) \rightarrow E_{\eta }\left\{ m_{1}^{\ast }\left(
O,h^{\ast }\right) \right\} $ is linear and it is continuous when $E_{\eta
}\left\{ P_{\eta }\left( D=1|Z\right) ^{-1}\right\} <\infty $ and has Riesz
representer $\mathcal{R}_{1}^{\ast }\left( Z^{\ast }\right) =DE_{\eta
}\left\{ \left( 1-D\right) |Z\right\} /E_{\eta }\left( D|Z\right) .$ Thus,
under the latter condition, the parameter falls in the class of \cite%
{NeweyCherno19}. Because $a^{\ast }\left( Z^{\ast }\right) $ is a
conditional expectation given $Z^{\ast },$ Proposition \ref{prop:3} implies
that $\chi \left( \eta \right) $ has the mixed bias property for $a^{\ast
}\left( Z^{\ast }\right) $ as defined, $S_{ab}^{\ast }=1$ and $b^{\ast
}\left( Z^{\ast }\right) =DE_{\eta }\left\{ \left( 1-D\right) |Z\right\}
/E_{\eta }\left( D|Z\right) .$
\end{example}

\begin{example}[Treatment effect on the treated]
\label{ex:ATT} With the notation and assumptions of Example \ref{ex:ATE} of
the main text, the parameter $ATT\left( \eta \right) \equiv E\left(
Y|D=1\right) -\chi \left( \eta \right) /E_{\eta }\left( D\right) $ where $%
\chi \left( \eta \right) \equiv E_{\eta }\left\{ Da\left( Z\right) \right\} $
and $a\left( Z\right) $ defined as $E_{\eta }\left\{ \left( 1-D\right)
Y|Z\right\} /E_{\eta }\left\{ \left( 1-D\right) |Z\right\} $ the parameter $%
ATT\left( \eta \right) $ is the average treatment effect on the treated.
Once again, regarding $1-D$ as another missing data indicator, Example \ref%
{ex:MAR-non-respondents} implies that $ATT\left( \eta \right) $ is a
continuous function of a parameter $\chi \left( \eta \right) $ in the class
of \cite{Robins08Higher} and of \cite{NeweyCherno19}, and other parameters $%
E\left( Y|D=1\right) $ and $E_{\eta }\left( D\right) $ whose estimation does
not require the estimation of high dimensional nuisance parameters
\end{example}

\begin{example}[Average policy effect of a counterfactual change of
covariate values]
Let $\chi \left( \eta \right) \equiv \psi \left( \eta \right) -E_{\eta
}\left( Y\right) $ where $\psi \left( \eta \right) =E_{\eta }\left\{ a\left(
t\left( D\right) ,L\right) \right\} $ with $a\left( D,L\right) \equiv
E_{\eta }\left( Y|D,L\right) .$ Then, with the notation and assumptions of
example \ref{ex:APE} of the main text, $\chi \left( \eta \right) $ is the
average policy effect of a counterfactual change $d\rightarrow t\left(
d\right) $ of treatment values (\cite{stock1989}). Note that $\psi \left(
\eta \right) =E_{\eta }\left\{ m_{1}\left( O,a\right) \right\} $ where for
any $h\left( D,L\right) $ $m_{1}\left( O,h\right) =h\left\{ t\left( D\right)
,L\right\} .$ The functional $h\in L_{2}\left( P_{\eta ,\left( D,Z\right)
}\right) \rightarrow E_{\eta }\left\{ m_{1}\left( O,h\right) \right\} $ is
continuous if $E_{\eta }\left[ \left\{ f_{t}\left( D|L\right) /f\left(
D|L\right) \right\} ^{2}\right] <\infty $ where $f_{t}\left( D|L\right) $ is
the density of $t\left( D\right) $ given $L.$ The Riesz representer of the
map is $\mathcal{R}_{1}\left( Z\right) =f_{t}\left( D|L\right) /f\left(
D|L\right) .$ In such case, $\psi \left( \eta \right) $ is in the class
studied in \cite{NeweyCherno19}, and thus $\chi \left( \eta \right) $ has
the mixed bias property, with with $S_{ab}=-1,$ $a\left( Z\right) $ as
defined, and $b\left( Z\right) =\mathcal{R}_{1}\left( Z\right) =f_{t}\left(
D|L\right) /f\left( D|L\right) .$ However, it can be shown that $\chi \left(
\eta \right) $ is not in the class of \cite{Robins08Higher}.
\end{example}

\subsection{Regularity conditions}

We now state the regularity conditions invoked in several of the
propositions and theorems in the main text. These are mild conditions that
are satisfied in all the examples provided in the main text and in this
Appendix.

\begin{condition}
\label{cond:R1}There exists a dense set $H_{a}$ of $L_{2}\left( P_{\eta
,Z}\right) $ such that  $H_{a}\cap 
\mathcal{A}\not=\varnothing$, and for each $\eta $ and each $h\in H_{a},$ there
exists $\varepsilon \left( \eta ,h\right) >0$ such that $a+th$ $\in \mathcal{%
A}$ if $\left\vert t\right\vert <\varepsilon \left( \eta ,h\right) $ where $%
a\left( Z\right) \equiv a\left( Z;\eta \right) .$  The same holds replacing $a$ with $b$ and $%
\mathcal{A}$ with $\mathcal{B}.$ Furthermore $E_{\eta }\left\{ \left\vert
S_{ab}b\left( Z\right) h\left( Z\right) \right\vert \right\} <\infty $ for $%
h\in H_{a}$ and $E_{\eta }\left\{ \left\vert S_{ab}a\left( Z\right) h\left(
Z\right) \right\vert \right\} <\infty $ for $h\in H_{b}.$  Moreover for all $\eta$, $E_{\eta}\left[ \left\vert S_{ab}a^{\prime} b^{\prime} \right\vert \right] < \infty$ for all $a^{\prime}\in \mathcal{A}$ and $b^{\prime}\in \mathcal{B}$.
\end{condition}

\begin{condition}
\label{cond:R2} $\chi \left( \eta \right) $ satisfies the mixed bias
property and there exists $b^{\prime }\in \mathcal{B}$ such that for all $%
\eta ,$ (i) $b^{\prime }\left( Z\right) \not=0$ a.s.$\left( P_{\eta
,Z}\right) ,$ and (ii) for the map $m_{2}$ defined in the proof of Theorem
1, $E_{\eta }\left\{ m_{2}\left( O,b%
{\acute{}}%
\right) |Z\right\} +E_{\eta }\left( S_{ab}|Z\right) b%
{\acute{}}%
\left( Z\right) a\left( Z\right) $ is in $L_{2}\left( P_{\eta ,Z}\right) $
and $m_{2}\left( O,b\right) -m_{2}\left( O,b%
{\acute{}}%
\right) /b%
{\acute{}}%
\left( Z\right) $ is in $L_{2}\left( P_{\eta }\right) $.
\end{condition}

\subsection{Proofs}

\begin{proof}[of Proposition \ref{prop:0}]
Let $\eta ^{\prime }$ be such that $a^{\prime }=a$ and $b^{\prime }=b.$
Without loss of generality consider a local variation independent
parameterization $\eta =\left( a,b,\tau \right) $ and a regular parametric
submodel $t\rightarrow \eta _{t}=\left( a_{t},b_{t},\tau _{t}\right) .$ Then,%
\begin{equation*}
\left. \frac{d}{dt}\chi \left( \eta _{t}\right) \right\vert _{t=0}=\left. 
\frac{d}{dt}\chi \left( a_{t},b,\tau \right) \right\vert _{t=0}+\left. \frac{%
d}{dt}\chi \left( a,b_{t},\tau \right) \right\vert _{t=0}+\left. \frac{d}{dt}%
\chi \left( a,b,\tau _{t}\right) \right\vert _{t=0}
\end{equation*}%
By $\left( \ref{bias1}\right) ,$%
\begin{equation*}
\chi \left( a_{t},b,\tau \right) =E_{\left( a_{t},b,\tau \right) }\left\{
\chi \left( \eta ^{\prime }\right) +\chi _{\eta ^{\prime }}^{1}\right\} 
\end{equation*}%
\begin{equation*}
\chi \left( a,b_{t},\tau \right) =E_{\left( a,b_{t},\tau \right) }\left\{
\chi \left( \eta ^{\prime }\right) +\chi _{\eta ^{\prime }}^{1}\right\} 
\end{equation*}%
and%
\begin{equation*}
\chi \left( a,b,\tau _{t}\right) =E_{\left( a,b,\tau _{t}\right) }\left\{
\chi \left( \eta ^{\prime }\right) +\chi _{\eta ^{\prime }}^{1}\right\} 
\end{equation*}%
Then, 
\begin{eqnarray*}
\left. \frac{d}{dt}\chi \left( \eta _{t}\right) \right\vert _{t=0} &=&\left. 
\frac{d}{dt}E_{\left( a_{t},b,\tau \right) }\left\{ \chi \left( \eta
^{\prime }\right) +\chi _{\eta ^{\prime }}^{1}\right\} \right\vert _{t=0} \\
&&+\left. \frac{d}{dt}E_{\left( a,b_{t},\tau \right) }\left\{ \chi \left(
\eta ^{\prime }\right) +\chi _{\eta ^{\prime }}^{1}\right\} \right\vert
_{t=0} \\
&&+\left. \frac{d}{dt}E_{\left( a,b,\tau _{t}\right) }\left\{ \chi \left(
\eta ^{\prime }\right) +\chi _{\eta ^{\prime }}^{1}\right\} \right\vert
_{t=0} \\
&=&\left. \frac{d}{dt}E_{\eta _{t}}\left\{ \chi \left( \eta ^{\prime
}\right) +\chi _{\eta ^{\prime }}^{1}\right\} \right\vert _{t=0} \\
&=&E_{\eta }\left[ \left\{ \chi \left( \eta ^{\prime }\right) +\chi _{\eta
^{\prime }}^{1}\right\} g\right] 
\end{eqnarray*}%
Consequently, 
\begin{eqnarray*}
\chi _{\eta }^{1} &=&\chi \left( \eta ^{\prime }\right) +\chi _{\eta
^{\prime }}^{1}-E_{\eta }\left\{ \chi \left( \eta ^{\prime }\right) +\chi
_{\eta ^{\prime }}^{1}\right\}  \\
&=&\chi \left( \eta ^{\prime }\right) +\chi _{\eta ^{\prime }}^{1}-\chi
\left( \eta \right) 
\end{eqnarray*}%
Thus, $\chi _{\eta }^{1}+\chi \left( \eta \right) =\chi \left( \eta ^{\prime
}\right) +\chi _{\eta ^{\prime }}^{1}$ which proves the proposition.
\end{proof}

\begin{proof}[of Theorem \protect\ref{theo:1}]
Fix $a^{\ast }\in  \mathcal{A}$ and $b^{\ast }\in \mathcal{B}$ and define 
\begin{equation*}
S_{0}^{\ast }\equiv \left( \chi +\chi ^{1}\right) _{\left( a^{\ast },b^{\ast
}\right) }-S_{ab}a^{\ast }b^{\ast }
\end{equation*}%
\begin{equation*}
m_{1}^{\ast }\left( O,a\right) \equiv \left\{ \left( \chi +\chi ^{1}\right)
_{\left( a,b^{\ast }\right) }-S_{ab}ab^{\ast }\right\} -S_{0}^{\ast }
\end{equation*}%
\begin{equation*}
m_{2}^{\ast }\left( O,b\right) \equiv \left\{ \left( \chi +\chi ^{1}\right)
_{\left( a^{\ast },b\right) }-S_{ab}a^{\ast }b\right\} -S_{0}^{\ast }
\end{equation*}%
For any $h\in \mathcal{A}$ we have%
\begin{eqnarray}
E_{\eta }\left[ m_{1}^{\ast }\left( O,h\right) \right]  &=&E_{\eta }\left\{
\left( \chi +\chi ^{1}\right) _{\left( h,b^{\ast }\right) }-S_{ab}hb^{\ast
}-S_{0}^{\ast }\right\}   \notag \\
&=&E_{\eta }\left\{ \left( \chi +\chi ^{1}\right) _{\left( h,b^{\ast
}\right) }-S_{ab}hb^{\ast }\right\} -E_{\eta }\left\{ \left( \chi +\chi
^{1}\right) _{\left( a^{\ast },b^{\ast }\right) }-S_{ab}a^{\ast }b^{\ast
}\right\}   \notag \\
&=&E_{\eta }\left\{ \left( \chi +\chi ^{1}\right) _{\left( h,b^{\ast
}\right) }-\chi \left( \eta \right) -S_{ab}hb^{\ast }\right\} -E_{\eta
}\left\{ \left( \chi +\chi ^{1}\right) _{\left( a^{\ast },b^{\ast }\right)
}-\chi \left( \eta \right) -S_{ab}a^{\ast }b^{\ast }\right\}   \notag \\
&=&E_{\eta }\left( -S_{ab}hb^{\ast }\right) +E_{\eta }\left\{ S_{ab}\left(
a-h\right) \left( b-b^{\ast }\right) \right\} -E_{\eta }\left\{ S_{ab}\left(
a-a^{\ast }\right) \left( b-b^{\ast }\right) \right\} +E_{\eta }\left(
S_{ab}a^{\ast }b^{\ast }\right)   \notag \\
&=&E_{\eta }\left\{ S_{ab}\left( a-h\right) b\right\} -E_{\eta }\left\{
S_{ab}\left( a-a^{\ast }\right) b\right\}   \notag \\
&=&-E_{\eta }\left( S_{ab}hb\right) +E_{\eta }\left( S_{ab}a^{\ast }b\right) 
\label{eq:affineproof}
\end{eqnarray}%
Likewise, by symmetry we have established that 
\begin{equation*}
E_{\eta }\left\{ m_{2}^{\ast }\left( O,h\right) \right\} =-E_{\eta }\left(
S_{ab}ha\right) +E_{\eta }\left( S_{ab}ab^{\ast }\right) 
\end{equation*}%
We will next show that 
\begin{equation}
\chi _{\eta }^{1}=S_{ab}ab+m_{1}^{\ast }\left( O,a\right) +m_{2}^{\ast
}\left( O,b\right) +S_{0}^{\ast }-\chi \left( \eta \right) 
\label{eq:IFstar}
\end{equation}%
To do so, it suffices to show that

(I) $E_{\eta }\left\{ S_{ab}ab+m_{1}^{\ast }\left( O,a\right) +m_{2}^{\ast
}\left( O,b\right) +S_{0}^{\ast }\right\} =\chi \left( \eta \right) $ and

(II) $\left. \frac{d}{dt}E_{\eta }\left\{ S_{ab}a_{t}b_{t}+m_{1}^{\ast
}\left( O,a_{t}\right) +m_{2}^{\ast }\left( O,b_{t}\right) +S_{0}\right\}
\right\vert _{t=0}=0$ for any regular submodel $t\rightarrow P_{\eta _{t}}$

To show (I) we write 
\begin{eqnarray*}
&&E_{\eta }\left\{ m_{1}^{\ast }\left( O,a\right) +m_{2}^{\ast }\left(
O,b\right) +S_{0}^{\ast }\right\}  \\
&=&E_{\eta }\left\{ \left( \chi +\chi ^{1}\right) _{\left( a,b^{\ast
}\right) }-S_{ab}ab^{\ast }+\left( \chi +\chi ^{1}\right) _{\left( a^{\ast
},b\right) }-S_{ab}a^{\ast }b-S_{0}\right\}  \\
&=&E_{\eta }\left[ \left( \chi +\chi ^{1}\right) _{\left( a,b^{\ast }\right)
}-\chi \left( \eta \right) -S_{ab}ab^{\ast }+\left( \chi +\chi ^{1}\right)
_{\left( a^{\ast },b\right) }-\chi \left( \eta \right) -S_{ab}a^{\ast
}b-\left\{ S_{0}-\chi \left( \eta \right) \right\} \right] +\chi \left( \eta
\right)  \\
&=&E_{\eta }\left( -S_{ab}ab^{\ast }-S_{ab}a^{\ast }b\right) -E_{\eta
}\left\{ S_{0}-\chi \left( \eta \right) \right\} +\chi \left( \eta \right) 
\\
&=&E_{\eta }\left( -S_{ab}ab^{\ast }-S_{ab}a^{\ast }b\right) -E_{\eta
}\left\{ \left( \chi +\chi ^{1}\right) _{\left( a^{\ast },b^{\ast }\right)
}-\chi \left( \eta \right) -S_{ab}a^{\ast }b^{\ast }\right\} +\chi \left(
\eta \right)  \\
&=&E_{\eta }\left( -S_{ab}ab^{\ast }-S_{ab}a^{\ast }b\right) -E_{\eta
}\left\{ S_{ab}\left( a-a^{\ast }\right) \left( b-b^{\ast }\right) \right\}
+E_{\eta }\left( S_{ab}a^{\ast }b^{\ast }\right) +\chi \left( \eta \right) 
\\
&=&-E_{\eta }\left( S_{ab}ab\right) +\chi \left( \eta \right) 
\end{eqnarray*}%
which shows (I)

To show (II) we note that by $\left( \ref{eq:affineproof}\right) $ 
\begin{equation*}
E_{\eta }\left\{ S_{ab}a_{t}b+m_{1}^{\ast }\left( O,a_{t}\right) \right\}
=E_{\eta }\left( S_{ab}a^{\ast }b\right) 
\end{equation*}%
and the right hand side does not depend on $a_{t}$. Likewise, 
\begin{equation*}
E_{\eta }\left\{ S_{ab}ab_{t}+m_{2}^{\ast }\left( O,b_{t}\right) \right\}
=E_{\eta }\left( S_{ab}ab^{\ast }\right) 
\end{equation*}%
Thus, 
\begin{eqnarray*}
&&\left. \frac{d}{dt}E_{\eta }\left\{ S_{ab}a_{t}b_{t}+m_{1}^{\ast }\left(
O,a_{t}\right) +m_{2}^{\ast }\left( O,b_{t}\right) +S_{0}^{\ast }\right\}
\right\vert _{t=0} \\
&=&\left. \frac{d}{dt}E_{\eta }\left\{ S_{ab}a_{t}b+m_{1}^{\ast }\left(
O,a_{t}\right) \right\} \right\vert _{t=0}+\left. \frac{d}{dt}E_{\eta
}\left\{ S_{ab}ab_{t}+m_{2}^{\ast }\left( O,b_{t}\right) \right\}
\right\vert _{t=0} \\
&=&0
\end{eqnarray*}%
This shows part (II) and thus concludes the proof of $\left( \ref{eq:IFstar}%
\right) .$

Next, take $a^{\dag }\in $ $H_{a}\cap \mathcal{A}$ and $b^{\dag }\in
H_{b}\cap \mathcal{B}$ which we know exist by Condition R.1. Also, by
Condition R.1 we know that $a^{\ast \ast }\equiv a^{\ast }+\varepsilon
a^{\dag }$ $\in \mathcal{A}$ and $b^{\ast \ast }\equiv b^{\ast }+\varepsilon
b^{\dag }$ $\in \mathcal{B}$ for an $\varepsilon >0$ sufficiently small.
Now, define $S_{0}^{\ast \ast },m_{1}^{\ast \ast }\left( O,a\right) $ and $%
m_{2}^{\ast \ast }\left( O,b\right) $ like $S_{0}^{\ast },m_{1}^{\ast
}\left( O,a\right) $ and $m_{2}^{\ast }\left( O,b\right) $ but using $%
a^{\ast \ast }$ and $b^{\ast \ast }$ instead of $a^{\ast }$ and $b^{\ast }.$
Then, 
\begin{equation*}
\chi _{\eta }^{1}=S_{ab}ab+m_{1}^{\ast \ast }\left( O,a\right) +m_{2}^{\ast
\ast }\left( O,b\right) +S_{0}^{\ast \ast }-\chi \left( \eta \right). 
\end{equation*}%
So, combining this equality with $\left( \ref{eq:IFstar}\right) $ we
conclude that 
\begin{equation*}
m_{1}^{\ast \ast }\left( O,a\right) +m_{2}^{\ast \ast }\left( O,b\right)
+S_{0}^{\ast \ast }=m_{1}^{\ast }\left( O,a\right) +m_{2}^{\ast }\left(
O,b\right) +S_{0}^{\ast }. 
\end{equation*}%
Thus, 
\begin{equation*}
m_{1}^{\ast \ast }\left( O,a\right) -m_{1}^{\ast }\left( O,a\right)
=m_{2}^{\ast }\left( O,b\right) -m_{2}^{\ast \ast }\left( O,b\right)
+S_{0}^{\ast }-S_{0}^{\ast \ast }. 
\end{equation*}%
The right hand side depends on $b\,\ $and the data $O,$ but the left hand
side depends on $a$ and the data $O$. Thus, we conclude that $m_{1}^{\ast
\ast }\left( O,a\right) -m_{1}^{\ast }\left( O,a\right) $ is a statistic $%
Q_{1}^{\dag }$ that does not depend on $\eta .$

Now, by $\left( \ref{eq:affineproof}\right) ,$ 
\begin{eqnarray*}
E_{\eta }\left( Q_{1}^{\dag }\right)  &=&E_{\eta }\left\{ m_{1}^{\ast \ast
}\left( O,a\right) \right\} -E_{\eta }\left\{ m_{1}^{\ast }\left( O,a\right)
\right\}  \\
&=&\left\{ -E_{\eta }\left( S_{ab}ab\right) +E_{\eta }\left( S_{ab}a^{\ast
\ast }b\right) \right\} -\left\{ -E_{\eta }\left( S_{ab}ab\right) +E_{\eta
}\left( S_{ab}a^{\ast }b\right) \right\}  \\
&=&E_{\eta }\left\{ S_{ab}\left( a^{\ast \ast }-a^{\ast }\right) b\right\} 
\\
&=&\varepsilon E_{\eta }\left( S_{ab}a^{\dag }b\right) 
\end{eqnarray*}%
Next, let $S_{0}^{\dag },m_{1}^{\dag }\left( O,a\right) $ and $m_{2}^{\dag
}\left( O,b\right) $ be defined like $S_{0}^{\ast },m_{1}^{\ast }\left(
O,a\right) $ and $m_{2}^{\ast }\left( O,b\right) $ but using $a^{\dag }$ and 
$b^{\dag }$ instead of $a^{\ast }$ and $b^{\ast }.$ By $\left( \ref%
{eq:affineproof}\right) \,\ $applied to $m_{1}^{\dag }\left( O,a\right) $
and $a^{\dag }$ instead of $m_{1}^{\ast }\left( O,a\right) $ and $a^{\ast }$
we have that 
\begin{eqnarray*}
E_{\eta }\left\{ m_{1}^{\dag }\left( O,h\right) \right\}  &=&-E_{\eta
}\left( S_{ab}hb\right) +E_{\eta }\left( S_{ab}a^{\dag }b\right)  \\
&=&-E_{\eta }\left( S_{ab}hb\right) +E_{\eta }\left( Q_{1}^{\dag }\right)
/\varepsilon .
\end{eqnarray*}%
Consequently, 
\begin{equation*}
m_{1}\left( O,h\right) \equiv m_{1}^{\dag }\left( O,h\right) -Q_{1}^{\dag
}/\varepsilon 
\end{equation*}%
satisfies 
\begin{equation}
E_{\eta }\left\{ m_{1}\left( O,h\right) \right\} =-E_{\eta }\left(
S_{ab}hb\right) \text{ for all }h\in \mathcal{A}.  \label{eq:m1equality}
\end{equation}%
and therefore the map $h\in \mathcal{A}\rightarrow E_{\eta }\left\{
m_{1}\left( O,h\right) \right\} $ is linear. In fact, for any $%
h_{1},h_{2}\in \mathcal{A}$ and constants $\alpha _{1}$ and $\alpha _{2}$
such that $\alpha _{1}h_{1}+\alpha _{2}h_{2}\in \mathcal{A}$, we know that 
\begin{equation*}
E_{\eta }\left\{ m_{1}\left( O,\alpha _{1}h_{1}+\alpha _{2}h_{2}\right)
\right\} =\alpha _{1}E_{\eta }\left\{ m_{1}\left( O,h_{1}\right) \right\}
+\alpha _{2}E_{\eta }\left\{ m_{1}\left( O,h_{2}\right) \right\} 
\end{equation*}%
is true for all $\eta .$ Then, for all $\eta ^{\prime }$ 
\begin{equation*}
E_{\eta ^{\prime }}\left[ m_{1}\left( O,\alpha _{1}h_{1}+\alpha
_{2}h_{2}\right) -\left\{ \alpha _{1}m_{1}\left( O,h_{1}\right) +\alpha
_{2}m_{1}\left( O,h_{2}\right) \right\} \right] =0
\end{equation*}%
By assumption the random variable $r\left( O\right) \equiv m_{1}\left(
O,\alpha _{1}h_{1}+\alpha _{2}h_{2}\right) -\left\{ \alpha _{1}m_{1}\left(
O,h_{1}\right) +\alpha _{2}m_{1}\left( O,h_{2}\right) \right\} $ is in $%
L_{2}\left( P_{\eta }\right) .$ The linearity a.s.$\left( P_{\eta }\right) $
of the map $h\in \mathcal{A}\rightarrow m_{1}\left( O,h\right) $ follows
from Lemma \ref{lemma:a1} below which implies that $r\left( O\right) =0$ a.s.$\left(
P_{\eta }\right) .$

Likewise, we can show that there exists $Q_{2}^{\dag }$ and $m_{2}\left(
O,h\right) \equiv m_{2}^{\dag }\left( O,h\right) -Q_{2}^{\dag }/\varepsilon $
such that $h\in \mathcal{B}\rightarrow E_{\eta }\left\{ m_{2}\left(
O,h\right) \right\} =-E_{\eta }\left( S_{ab}ah\right) $ and the map $h\in 
\mathcal{B\rightarrow }m_{2}\left( O,h\right) $ is linear. Finally, define $%
S_{0}=S_{0}^{\dag }+Q_{1}^{\dag }/\varepsilon +Q_{2}^{\dag }/\varepsilon $
and conclude from $\chi _{\eta }^{1}=S_{ab}ab+m_{1}^{\dag }\left( O,a\right)
+m_{2}^{\dag }\left( O,b\right) +S_{0}^{\dag }-\chi \left( \eta \right) $
that 
\begin{equation*}
\chi _{\eta }^{1}=S_{ab}ab+m_{1}\left( O,a\right) +m_{2}\left( O,b\right)
+S_{0}-\chi \left( \eta \right) .
\end{equation*}

In addition, from $\left( \ref{eq:m1equality}\right) $ and its analogous for 
$b,$ we have that 
\begin{equation*}
E_{\eta }\left\{ S_{ab}ab+m_{1}\left( O,a\right) \right\} =0
\end{equation*}%
and 
\begin{equation*}
E_{\eta }\left\{ S_{ab}ab+m_{2}\left( O,b\right) \right\} =0.
\end{equation*}%
Consequently, 
\begin{eqnarray*}
\chi \left( \eta \right)  &=&E_{\eta }\left\{ m_{1}\left( O,a\right)
\right\} +E_{\eta }\left( S_{0}\right)  \\
&=&E_{\eta }\left\{ m_{2}\left( O,b\right) \right\} +E_{\eta }\left(
S_{0}\right) 
\end{eqnarray*}%
thus showing $\left( \ref{eq:parameter}\right) $ holds. This concludes the
proof of the Theorem.
\end{proof}

\begin{lemma}\label{lemma:a1}
Suppose that $r\left( O\right) $ is in $L_{2}\left( P_{\eta }\right) $ and
that for all $\eta 
{\acute{}}%
,$ $E_{\eta 
{\acute{}}%
}\left\{ r\left( O\right) \right\} =0.$ Then $r\left( O\right) =0$ a.s.$%
\left( P_{\eta }\right) .$
\end{lemma}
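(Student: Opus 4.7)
The plan is to use the nonparametric assumption on $\mathcal{M}$ (the closed linear span of scores of regular parametric submodels at $P_{\eta}$ is all of $L_{2}(P_{\eta})$, minus constants) to conclude that a random variable that is uniformly mean-zero across the model and lies in $L_{2}(P_{\eta})$ must vanish almost surely.

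First I would take $\eta ^{\prime }=\eta $ in the hypothesis, which yields $E_{\eta }\{r(O)\}=0$, so $r$ already belongs to the mean-zero subspace $L_{2}^{0}(P_{\eta })\subset L_{2}(P_{\eta })$. Next, for any regular one-dimensional parametric submodel $t\mapsto P_{\eta _{t}}$ through $\eta _{0}=\eta $ with score $g\in L_{2}^{0}(P_{\eta })$, the hypothesis gives $E_{\eta _{t}}\{r(O)\}=0$ for every $t$ in a neighborhood of $0$. Differentiating this identity at $t=0$ and interchanging derivative and integral (justified by the standard quadratic mean differentiability of regular submodels combined with $r\in L_{2}(P_{\eta })$) produces
\begin{equation*}
0=\left. \frac{d}{dt}E_{\eta _{t}}\{r(O)\}\right\vert _{t=0}=E_{\eta }\{r(O)\,g\}.
\end{equation*}
Thus $r$ is orthogonal in $L_{2}(P_{\eta })$ to every score $g$ of a regular parametric submodel of $\mathcal{M}$.

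Since $\mathcal{M}$ is nonparametric (in the sense made explicit in the introduction), the closed linear span of all such scores equals $L_{2}^{0}(P_{\eta })$. Consequently $r$ is orthogonal to a dense subspace of $L_{2}^{0}(P_{\eta })$, and by continuity of the inner product $r$ is orthogonal to all of $L_{2}^{0}(P_{\eta })$. But $r$ itself lies in $L_{2}^{0}(P_{\eta })$, so $\langle r,r\rangle _{L_{2}(P_{\eta })}=0$, which forces $r(O)=0$ a.s.~$(P_{\eta })$.

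The one technical point that requires care is the interchange of differentiation and integration in the step $\frac{d}{dt}E_{\eta _{t}}\{r(O)\}|_{t=0}=E_{\eta }\{r(O)g\}$. This is the standard fact that for regular parametric submodels (quadratic mean differentiable at $t=0$), the map $t\mapsto E_{\eta _{t}}\{r(O)\}$ is differentiable with derivative $E_{\eta }\{r(O)g\}$ whenever $r\in L_{2}(P_{\eta })$; since that is assumed, no additional work beyond invoking this classical result is needed.
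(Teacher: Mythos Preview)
Your argument is correct and rests on the same idea as the paper's proof: $r$ is orthogonal in $L_{2}(P_{\eta})$ to every score of a regular submodel, hence vanishes. The execution differs. The paper works concretely: it builds the tilted submodel $p_{t}(O)=p_{\eta}(O)\{1+t\,r(O)\}$ (valid for bounded $r$), whose score at $t=0$ is $r$ itself, so a single differentiation of $E_{p_{t}}\{r(O)\}=0$ gives $E_{\eta}\{r(O)^{2}\}=0$ directly; the unbounded case is then handled by truncating $r$ and passing to the limit in $L_{2}$. You instead let $g$ be the score of an \emph{arbitrary} regular submodel, get $E_{\eta}\{r\,g\}=0$, and invoke the nonparametric assumption that such scores are dense in $L_{2}^{0}(P_{\eta})$ to conclude $r\perp L_{2}^{0}(P_{\eta})$. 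Your route is slightly cleaner in that it avoids the bounded/truncation detour, at the cost of appealing to the abstract density statement and to the standard differentiability-under-the-integral fact for QMD submodels; the paper's route is more self-contained because it exhibits the single submodel needed rather than quantifying over all of them.
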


\begin{proof}[of Lemma \ref{lemma:a1}]
Suppose first that $r\left( O\right) $ is bounded. Then consider the
submodel $t\rightarrow p_{t}\left( O\right) =p_{\eta }\left( O\right)
\left\{ 1+tr\left( O\right) \right\} $ . Note that for $t$ sufficiently
small, $p_{t}>0$ (by the boundedness of $r\left( O\right) $) and $p_{t}$
integrates to 1 because $E_{\eta 
{\acute{}}%
}\left\{ r\left( O\right) \right\} =0.$ Then score of the submodel is $%
r\left( O\right) .$ Then, since by assumption the mean $E_{t}\left\{ r\left(
O\right) \right\} $ of $r\left( O\right) $ under $p_{t}$ satisfies $%
E_{p_{t}}\left\{ r\left( O\right) \right\} =0$ for all $t,$ we have 
\begin{equation*}
0=\left. \frac{d}{dt}E_{t}\left\{ r\left( O\right) \right\} \right\vert
_{t=0}=E_{\eta }\left\{ r\left( O\right) ^{2}\right\} 
\end{equation*}%
Consequently $r\left( O\right) =0$ a.s.$\left( P_{\eta }\right) .$ Next,
given an arbitrary $r\left( O\right) $ in $L_{2}\left( P_{\eta }\right) $
such that $E_{\eta 
{\acute{}}%
}\left\{ r\left( O\right) \right\} =0$ for all $\eta 
{\acute{}}%
,$ consider define $r_{n}\left( O\right) \equiv r\left( O\right) I_{\left(
-n,n\right) }\left\{ r\left( O\right) \right\} -E_{\eta }\left\{ r\left(
O\right) I_{\left( -n,n\right) }\left( r\left( O\right) \right) \right\} .$
Then $r_{n}\left( O\right) $ satisfies $E_{\eta }\left\{ r_{n}\left(
O\right) \right\} =0$ and is bounded. So, $r_{n}\left( O\right) =0$ a.s.$%
\left( P_{\eta }\right) .$ However, $r_{n}\left( O\right) $ converges in $%
L_{2}\left( P_{\eta }\right) $ to $r\left( O\right) $ so $r\left( O\right) =0
$ a.s.$\left( P_{\eta }\right) .$
\end{proof}

\begin{proof}[of Theorem \protect\ref{theo:2}]
For any fixed $h\in H_{a},$ and a given $\eta =\left( a,b,\tau \right) $
consider a parametric submodel $t\rightarrow P_{\eta _{t}}$ where $\eta
_{t}=\left( a_{t},b,\tau \right) $ with $a_{t}=a+th$ and $\left\vert
t\right\vert <\varepsilon \left( \eta ,h\right) $ as in Condition \ref%
{cond:R1}. Then, since $\chi _{\eta }^{1}$ is an influence function of the
form $\left( \ref{eq:IF}\right) $ we have 
\begin{eqnarray*}
0 &=&\frac{d}{dt}\left. E_{\eta }\left\{ S_{ab}a_{t}b+m_{1}\left(
O,a_{t}\right) +m_{2}\left( O,b\right) +S_{0}\right\} \right\vert _{t=0} \\
&=&\frac{d}{dt}\left. E_{\eta }\left\{ S_{ab}\left( a+th\right)
b+m_{1}\left( O,a\right) +tm_{1}\left( O,h\right) \right\} \right\vert _{t=0}
\\
&=&E_{\eta }\left\{ S_{ab}hb+m_{1}\left( O,h\right) \right\} .
\end{eqnarray*}%
The continuity of the maps $h\in L_{2}\left( P_{\eta ,Z}\right) \rightarrow
E_{\eta }\left\{ m_{1}\left( O,h\right) \right\} $ and $h\in L_{2}\left(
P_{\eta ,Z}\right) \rightarrow E_{\eta }\left\{ S_{ab}h\left( Z\right)
b\left( Z\right) \right\} $ implies that the map $h\in L_{2}\left( P_{\eta
,Z}\right) \rightarrow E_{\eta }\left\{ S_{ab}hb+m_{1}\left( O,h\right)
\right\} $ is continuous. Then, since we have just shown that this map
evaluates to 0 at a dense set of $L_{2}\left( P_{\eta ,Z}\right) ,$ it must
equal to 0 for all $h\in L_{2}\left( P_{\eta ,Z}\right) .$ Reasoning
analogously, we arrive at the conclusion that 
\begin{equation}
E_{\eta }\left\{ S_{ab}ha+m_{2}\left( O,h\right) \right\} =0
\label{eq:unbeq1}
\end{equation}%
for all $h\in L_{2}\left( P_{\eta ,Z}\right) ,$ thus showing part (ii) of
the Theorem. Next, suppose that $a^{\prime }-a\in L_{2}\left( P_{\eta
,Z}\right) $ and $b^{\prime }-b\in L_{2}\left( P_{\eta ,Z}\right) .$ Then,
applying part (ii) we have that $E_{\eta }\left[ S_{ab}\left( a^{\prime
}-a\right) b+m_{1}\left\{ O,\left( a^{\prime }-a\right) \right\} \right] =0$
and $E_{\eta }\left[ S_{ab}\left( b^{\prime }-b\right) a+m_{2}\left\{
O,\left( b^{\prime }-b\right) \right\} \right] =0$. Consequently, 
\begin{eqnarray*}
E_{\eta }\left\{ \chi \left( \eta ^{\prime }\right) +\chi _{\eta ^{\prime
}}^{1}\right\} -\chi \left( \eta \right)  &=&E_{\eta }\left\{ \chi \left(
\eta ^{\prime }\right) +\chi _{\eta ^{\prime }}^{1}\right\} -E_{\eta
}\left\{ \chi \left( \eta \right) +\chi _{\eta }^{1}\right\}  \\
&=&E_{\eta }\left\{ S_{ab}a^{\prime }b^{\prime }+m_{1}\left( O,a^{\prime
}\right) +m_{2}\left( O,b^{\prime }\right) \right\} - \\
&&E_{\eta }\left\{ S_{ab}ab+m_{1}\left( O,a\right) +m_{2}\left( O,b\right)
\right\}  \\
&=&E_{\eta }\left\{ S_{ab}a^{\prime }b^{\prime }+m_{1}\left( O,a^{\prime
}\right) +m_{2}\left( O,b^{\prime }\right) \right\}  \\
&&-E_{\eta }\left\{ S_{ab}ab+m_{1}\left( O,a\right) +m_{2}\left( O,b\right)
\right\}  \\
&&-E_{\eta }\left\{ S_{ab}\left( a^{\prime }-a\right) b+m_{1}\left( O,\left(
a^{\prime }-a\right) \right) \right\}  \\
&&-E_{\eta }\left\{ S_{ab}\left( b^{\prime }-b\right) a+m_{2}\left( O,\left(
b^{\prime }-b\right) \right) \right\}  \\
&=&E_{\eta }\left\{ S_{ab}\left( a-a^{\prime }\right) \left( b-b^{\prime
}\right) \right\} 
\end{eqnarray*}

thus showing part (i) of the Theorem.

Turn now to the proof of part (iii). Equation $\left( \ref{eq:unbeq1}\right) 
$ implies that for all $h\in L_{2}\left( P_{\eta ,Z}\right) ,$ 
\begin{equation*}
0=E_{\eta }\left( S_{ab}ha+\mathcal{R}_{2}h\right). 
\end{equation*}%
Thus, if $h^{\ast }\left( Z\right) \equiv E_{\eta }\left( S_{ab}|Z\right)
a\left( Z\right) +\mathcal{R}_{2}\left( Z\right) $ is in $L_{2}\left(
P_{\eta ,Z}\right) ,$ then specializing at $h=h^{\ast }$ the preceding
identity we conclude that a.s.$\left( P_{\eta ,Z}\right) $ 
\begin{equation*}
E_{\eta }\left( S_{ab}a+\mathcal{R}_{2}|Z\right) =0 
\end{equation*}%
or equivalently $a\left( Z\right) =-\mathcal{R}_{2}\left( Z\right) /E_{\eta
}\left( S_{ab}|Z\right) $ if $E_{\eta }\left( S_{ab}|Z\right) \not=0$ a.s.$%
\left( P_{\eta ,Z}\right) $. The assertion for \thinspace $b\left( Z\right) $
is proved analogously.

Next, we prove part (iv). If $b$ is in $L_{2}\left( P_{\eta ,Z}\right) ,$
then specializing $\left( \ref{eq:unbeq1}\right) $ at $h=b$ we obtain 
\begin{eqnarray}
\chi \left( \eta \right)  &=&E_{\eta }\left\{ S_{ab}ab+m_{1}\left(
O,a\right) +m_{2}\left( O,b\right) +S_{0}\right\}   \label{eq:form1} \\
&=&E_{\eta }\left\{ m_{1}\left( O,a\right) +S_{0}\right\} .  \notag
\end{eqnarray}%
On the other hand, if $b\notin L_{2}\left( P_{\eta ,Z}\right) $ but $\left(
1+t\right) b\in \mathcal{B}$ for $0<t<\varepsilon $ then, given $\eta
=\left( a,b,\tau \right) $ consider the parametric submodel $t\rightarrow
P_{\eta _{t}}$ where $\eta _{t}=\left( a,b_{t},\tau \right) $ with $%
b_{t}=b+tb$ and $0<t<\varepsilon .$ Then, by $\chi _{\eta }^{1}$ of the form 
$\left( \ref{eq:IF}\right) $ being an influence function and with $\frac{d}{%
dt^{+}}$ denoting the right derivative, we have 
\begin{eqnarray*}
0 &=&\frac{d}{dt^{+}}\left. E_{\eta }\left\{ S_{ab}ab_{t}+m_{1}\left(
O,a\right) +m_{2}\left( O,b_{t}\right) +S_{0}\right\} \right\vert _{t=0} \\
&=&\frac{d}{dt^{+}}\left. E_{\eta }\left\{ S_{ab}a\left( b+tb\right)
+m_{2}\left( O,b\right) +tm_{2}\left( O,b\right) \right\} \right\vert _{t=0}
\\
&=&E_{\eta }\left\{ S_{ab}ab+m_{2}\left( O,b\right) \right\} .
\end{eqnarray*}%
So, applying again $\left( \ref{eq:form1}\right) $ we arrive at $\chi \left(
\eta \right) =E_{\eta }\left\{ m_{1}\left( O,a\right) +S_{0}\right\} .$ The
same reasoning, but now taking left derivatives, yields to the same
conclusion if $\left( 1+t\right) b\in \mathcal{B}$ for $-\varepsilon <t<0.$
This shows (iv.b). Part (iv.a) is proved analogously. Finally, part (iv.c)
follows from 
\begin{eqnarray*}
\chi \left( \eta \right)  &=&E_{\eta }\left\{ S_{ab}ab+m_{1}\left(
O,a\right) +m_{2}\left( O,b\right) +S_{0}\right\}  \\
&=&E_{\eta }\left\{ S_{ab}ab+m_{1}\left( O,a\right) +m_{2}\left( O,b\right)
+S_{ab}ab+S_{0}-S_{ab}ab\right\}  \\
&=&E_{\eta }\left\{ S_{0}-S_{ab}ab\right\} .
\end{eqnarray*}

Turn now to the proof of part (v). By part (iii) we have that a.s.$\left(
P_{\eta ,Z}\right) $%
\begin{equation*}
a\left( Z\right) =-\frac{\mathcal{R}_{2}\left( Z\right) }{E_{\eta }\left(
S_{ab}|Z\right) }\text{ and }b\left( Z\right) =-\frac{\mathcal{R}_{1}\left(
Z\right) }{E_{\eta }\left( S_{ab}|Z\right) }
\end{equation*}%
Next, for any $h\in $ $L_{2}\left( P_{\eta ,Z}\right) ,$ write 
\begin{eqnarray*}
&&E_{\eta }\left\{ S_{ab}\frac{h^{2}}{2}+m_{1}\left( O,h\right) \right\}  \\
&=&E_{\eta }\left\{ E_{\eta }\left( S_{ab}|Z\right) \frac{h\left( Z\right)
^{2}}{2}+\mathcal{R}_{1}\left( Z\right) h\left( Z\right) \right\}  \\
&=&E_{\eta }\left[ \frac{E_{\eta }\left( S_{ab}|Z\right) }{2}\left[ h\left(
Z\right) ^{2}+2\frac{\mathcal{R}_{1}\left( Z\right) }{E_{\eta }\left(
S_{ab}|Z\right) }h\left( Z\right) +\left\{ \frac{\mathcal{R}_{1}\left(
Z\right) }{E_{\eta }\left( S_{ab}|Z\right) }\right\} ^{2}\right] \right]
-E_{\eta }\left[ \frac{E_{\eta }\left( S_{ab}|Z\right) }{2}\left\{ \frac{%
\mathcal{R}_{1}\left( Z\right) }{E_{\eta }\left( S_{ab}|Z\right) }\right\}
^{2}\right]  \\
&=&E_{\eta }\left[ \frac{E_{\eta }\left( S_{ab}|Z\right) }{2}\left\{ h\left(
Z\right) +\frac{\mathcal{R}_{1}\left( Z\right) }{E_{\eta }\left(
S_{ab}|Z\right) }\right\} ^{2}\right] -E_{\eta }\left[ \frac{E_{\eta }\left(
S_{ab}|Z\right) }{2}\left\{ \frac{\mathcal{R}_{1}\left( Z\right) }{E_{\eta
}\left( S_{ab}|Z\right) }\right\} ^{2}\right] .
\end{eqnarray*}%
So%
\begin{eqnarray*}
\arg \min_{h\in L_{2}\left( P_{\eta ,Z}\right) }E_{\eta }\left\{ S_{ab}\frac{%
h^{2}}{2}+m_{1}\left( O,h\right) \right\}  &=&\arg \min_{h\in L_{2}\left(
P_{\eta ,Z}\right) }E_{\eta }\left\{ \frac{E_{\eta }\left( S_{ab}|Z\right) }{%
2}\left[ h\left( Z\right) +\frac{\mathcal{R}_{1}\left( Z\right) }{E_{\eta
}\left( S_{ab}|Z\right) }\right] ^{2}\right\}  \\
&=&-\frac{\mathcal{R}_{1}\left( Z\right) }{E_{\eta }\left( S_{ab}|Z\right) }
\\
&=&b\left( Z\right) .
\end{eqnarray*}%
The assertion for the minimization leading to $a\left( Z\right) $ is proved
analogously.
\end{proof}

\begin{proof}[of Proposition \protect\ref{prop:2}]
With the definition of $m_{2}$ given in the proof of Theorem 1 we have that $%
E_{\eta }\left\{ m_{2}\left( O,h\right) \right\} =-E_{\eta }\left\{
S_{ab}ha\right\} $ for all $h\in \mathcal{B}$. Fix $b^{\prime }\in \mathcal{B%
}$ such that $b^{\prime }\left( Z\right) \not=0$ a.s.$\left( P_{\eta
,Z}\right) .$ Then, 
\begin{equation*}
E_{\eta 
{\acute{}}%
_{1}}\left[ E_{\eta _{2}}\left\{ m_{2}\left( O,b^{\prime }\right) |Z\right\}
+E_{\eta _{2}}\left( S_{ab}|Z\right) b^{\prime }\left( Z\right) a\left(
Z\right) \right] =0\text{ for all }\eta 
{\acute{}}%
_{1}.
\end{equation*}%
Since by assumption $a$ does not depend on $\eta 
{\acute{}}%
_{1},$ then $s_{\eta _{2}}\left( Z\right) \equiv E_{\eta _{2}}\left\{
m_{2}\left( O,b^{\prime }\right) |Z\right\} +E_{\eta _{2}}\left(
S_{ab}|Z\right) b^{\prime }\left( Z\right) a\left( Z\right) $ is a fixed
function of $Z$ (i.e. independent of $\eta 
{\acute{}}%
_{1})$ with mean zero under any marginal law of $Z.$ Hence, since by
condition 2, $s_{\eta _{2}}\left( Z\right) $ is in $L_{2}\left( P_{\eta
,Z}\right) ,$ then by Lemma \ref{lemma:a1}, $s_{\eta _{2}}\left( Z\right) =0$ a.s.$%
\left( P_{\eta ,Z}\right) ,$ from where we conclude that $a\left( Z\right)
=-E_{\eta _{2}}\left\{ q\left( O\right) |Z\right\} /E_{\eta _{2}}\left(
S_{ab}|Z\right) $ for $q\left( O\right) \equiv m_{2}\left( O,b^{\prime
}\right) |/b^{\prime }\left( Z\right) .$

Next, write for any $\eta $%
\begin{eqnarray*}
0 &=&E_{\eta }\left\{ S_{ab}ab+m_{2}\left( O,b\right) \right\}  \\
&=&E_{\eta }\left[ S_{ab}\left\{ \frac{-E_{\eta }\left[ q\left( O\right) |Z%
\right] }{E_{\eta }\left[ S_{ab}|Z\right] }\right\} b+m_{2}\left( O,b\right) %
\right]  \\
&=&E_{\eta }\left\{ -q\left( O\right) b+m_{2}\left( O,b\right) \right\} 
\end{eqnarray*}%
and since in the last display $-q\left( O\right) b+m_{2}\left( O,b\right) $
is a statistic independent of $\eta ,$ which, by condition 2, is in $%
L_{2}\left( P_{\eta ,Z}\right) $ and the display holds for all $\eta $ then $%
-q\left( O\right) b+m_{2}\left( O,b\right) =0$ a.s.$\left( P_{\eta }\right) $
for all $\eta .$ This shows that $m_{2}\left( O,h\right) =q\left( O\right) h.
$
\end{proof}

\begin{proof}[of Proposition \protect\ref{prop:3}]
For a regular parametric submodel $t\rightarrow \eta _{t}$ with score $g$ at 
$t=0$ (with $\eta _{t=0}=\eta )$, 
\begin{eqnarray*}
\left. \frac{d}{dt}\chi \left( \eta _{t}\right) \right\vert _{t=0} &=&\left. 
\frac{d}{dt}E_{\eta _{t}}\left\{ m_{1}\left( O,a_{t}\right) \right\}
+E_{\eta _{t}}\left( S_{0}\right) \right\vert _{t=0} \\
&=&E_{\eta }\left\{ m_{1}\left( O,a\right) g\right\} +\left. \frac{d}{dt}%
E_{\eta }\left\{ m_{1}\left( O,a_{t}\right) \right\} \right\vert
_{t=0}+E_{\eta }\left( S_{0}g\right) .
\end{eqnarray*}%
But, 
\begin{eqnarray*}
&&\left. \frac{d}{dt}E_{\eta }\left\{ m_{1}\left( O,a_{t}\right) \right\}
\right\vert _{t=0} \\
&=&\left. \frac{d}{dt}E_{\eta }\left\{ \mathcal{R}_{1}\left( Z\right)
a_{t}\right\} \right\vert _{t=0} \\
&=&-E_{\eta }\left[ \mathcal{R}_{1}\left( Z\right) \left. \frac{d}{dt}%
\left\{ \frac{E_{\eta _{t}}\left[ q\left( O\right) |Z\right] }{E_{\eta _{t}}%
\left[ S_{ab}|Z\right] }\right\} \right\vert _{t=0}\right]  \\
&=&-E_{\eta }\left[ \mathcal{R}_{1}\left( Z\right) \left[ \frac{E_{\eta }%
\left[ \left\{ q\left( O\right) -E_{\eta }\left\{ q\left( O\right)
|Z\right\} \right\} g|Z\right] }{E_{\eta }\left( S_{ab}|Z\right) }-E_{\eta
}\left\{ q\left( O\right) |Z\right\} \frac{E_{\eta }\left[ \left\{
S_{ab}-E_{\eta }\left( S_{ab}|Z\right) \right\} g|Z\right] }{E_{\eta }\left(
S_{ab}|Z\right) ^{2}}\right] \right]  \\
&=&E_{\eta }\left[ -\frac{\mathcal{R}_{1}\left( Z\right) }{E_{\eta }\left(
S_{ab}|Z\right) }\left[ E_{\eta }\left\{ q\left( O\right) -E_{\eta }\left\{
q\left( O\right) |Z\right\} \right\} -E_{\eta }\left[ q\left( O\right) |Z%
\right] \frac{E_{\eta }\left\{ S_{ab}-E_{\eta }\left( S_{ab}|Z\right)
\right\} }{E_{\eta }\left( S_{ab}|Z\right) }\right] g\right]  \\
&=&E_{\eta }\left[ -\frac{\mathcal{R}_{1}\left( Z\right) }{E_{\eta }\left(
S_{ab}|Z\right) }\left\{ q\left( O\right) -\frac{E_{\eta }\left\{ q\left(
O\right) |Z\right\} }{E_{\eta }\left( S_{ab}|Z\right) }S_{ab}\right\} g%
\right]  \\
&=&E_{\eta }\left[ b\left( Z\right) \left\{ q\left( O\right) +a\left(
Z\right) S_{ab}\right\} g\right] 
\end{eqnarray*}%
where 
\begin{equation*}
b\left( Z\right) \equiv -\frac{\mathcal{R}_{1}\left( Z\right) }{E_{\eta
}\left( S_{ab}|Z\right) }.
\end{equation*}%
Thus, 
\begin{eqnarray*}
\chi _{\eta }^{1} &=&m_{1}\left( O,a\right) +b\left( Z\right) \left\{
q\left( O\right) +a\left( Z\right) S_{ab}\right\} +S_{0} \\
&&-E_{\eta }\left[ m_{1}\left( O,a\right) +b\left( Z\right) \left\{ q\left(
O\right) +a\left( Z\right) S_{ab}\right\} +S_{0}\right]  \\
&=&S_{ab}ab+m_{1}\left( O,a\right) +q\left( O\right) b+S_{0} \\
&&-\chi \left( \eta \right) -E_{\eta }\left[ b\left( Z\right) \left\{
q\left( O\right) +a\left( Z\right) S_{ab}\right\} \right] 
\end{eqnarray*}%
But, 
\begin{eqnarray*}
E_{\eta }\left[ b\left( Z\right) \left\{ q\left( O\right) +a\left( Z\right)
S_{ab}\right\} \right]  &=&E_{\eta }\left[ b\left( Z\right) \left[ E_{\eta
}\left\{ q\left( O\right) |Z\right\} +a\left( Z\right) E_{\eta }\left(
S_{ab}|Z\right) \right] \right]  \\
&=&0
\end{eqnarray*}%
where the last identity follows by definition of $a\left( Z\right) .$ The
last assertion of the Theorem follows by Theorem \ref{theo:2}.
\end{proof}

\bigskip

\begin{proof}

Here we prove that the parameter $\protect\psi \left( \protect\eta %
\right) $ in Example \protect\ref{ex:MNAR} is not in the class studied in 
\protect\cite{NeweyCherno19}.

Let $O=\left( DY,D,Z\right) .$ Notice that given $D=0,$ $O$ depends only on $%
Z.$ Let 
\begin{equation*}
\psi \left( \eta \right) \equiv E_{\eta }\left[ \left( 1-D\right) \frac{%
E_{\eta }\left\{ DY\exp \left( \delta Y\right) |Z\right\} }{E_{\eta }\left\{
D\exp \left( \delta Y\right) |Z\right\} }\right] =E_{\eta }\left\{ \left(
1-D\right) a(Z)\right\} 
\end{equation*}%
where $a\left( Z\right) \equiv E_{\eta }\left\{ DY\exp \left( \delta
Y\right) |Z\right\} /E_{\eta }\left\{ D\exp \left( \delta Y\right)
|Z\right\} $ for $\delta \not=0.$ Suppose that there exists $m_{1}^{\ast
}\left( O,\cdot \right) $ such that for each $\eta ,$ the map $h\in
L_{2}\left( P_{\eta ,\left( D,Z\right) }\right) \rightarrow E_{\eta }\left\{
m_{1}^{\ast }\left( O,h\right) \right\} $ is continuous and linear and such
that 
\begin{equation*}
\sigma \left( \eta \right) =E_{\eta }\left\{ m_{1}^{\ast }\left( O,a^{\ast
}\right) \right\} +E_{\eta }\left( S_{0}^{\ast }\right) 
\end{equation*}%
where $a^{\ast }\left( D,Z\right) =E_{\eta }\left\{ \left. q\left( O\right)
\right\vert D,Z\right\} $ for some statistic $q\left( O\right) $.

Without loss of generality we can assume that $q\left( O\right) =Dq^{\ast
}\left( Y,Z\right) $. To see this write $q\left( O\right) =Dq^{\ast }\left(
Y,Z\right) +\left( 1-D\right) q^{\ast \ast }\left( Z\right) .$ Then, 
\begin{eqnarray*}
a^{\ast }\left( D,Z\right)  &=&E_{\eta }\left\{ \left. q\left( O\right)
\right\vert D,Z\right\}  \\
&=&a_{1}^{\ast }\left( D,Z\right) +a_{0}^{\ast }\left( D,Z\right) 
\end{eqnarray*}%
where $a_{1}^{\ast }\left( D,Z\right) \equiv E_{\eta }\left\{ \left.
Dq^{\ast }\left( Y,Z\right) \right\vert D,Z\right\} $ and $a_{0}^{\ast
}\left( D,Z\right) \equiv \left( 1-D\right) q^{\ast \ast }\left( Z\right) $.
Then, by the assumed linearity of the map $h\in L_{2}\left( P_{\eta ,\left(
D,Z\right) }\right) \rightarrow E_{\eta }\left\{ m_{1}^{\ast }\left(
O,h\right) \right\} $ we can now write 
\begin{eqnarray*}
E_{\eta }\left\{ m_{1}^{\ast }\left( O,a^{\ast }\right) \right\} +E_{\eta
}\left( S_{0}^{\ast }\right)  &=&E_{\eta }\left\{ m_{1}^{\ast }\left(
O,a_{1}^{\ast }\right) \right\} +E_{\eta }\left\{ m_{1}^{\ast }\left(
O,a_{0}^{\ast }\right) \right\} +E_{\eta }\left( S_{0}^{\ast }\right)  \\
&=&E_{\eta }\left\{ m_{1}^{\ast }\left( O,a_{1}^{\ast }\right) \right\}
+E_{\eta }\left( S_{0}^{\ast \ast }\right) 
\end{eqnarray*}%
where $S_{0}^{\ast \ast }=m_{1}^{\ast }\left( O,a_{0}^{\ast }\right)
+S_{0}^{\ast }$ is a statistic because $a_{0}^{\ast }\left( D,Z\right) $
does not depend on $\eta $.

So, from now on we will assume $a^{\ast }\left( D,Z\right) \equiv E_{\eta
}\left\{ \left. q\left( O\right) \right\vert D,Z\right\} \,\ $where $q\left(
O\right) =Dq^{\ast }\left( Y,Z\right) $ for some $q^{\ast }$. Note that $%
q\left( Y,Z\right) $ depends on $Y,$ for otherwise $a^{\ast }\left(
D,Z\right) $ would not depend on $\eta .$

Because $\sigma \left( \eta \right) $ is the same functional as $\psi \left(
\eta \right) $ then their unique influence functions $\sigma _{\eta }^{1}$
and $\psi _{\eta }^{1}$ must agree. We shall compute next the influence
function $\psi _{\eta }^{1}$ of $\psi \left( \eta \right) $. For any path $%
t\rightarrow \eta _{t}$ through $\eta _{t=0}=\eta $ with score $g$ we have 
\begin{eqnarray*}
&&\left. \frac{d}{dt}E_{\eta _{t}}\left[ \left( 1-D\right) \frac{E_{\eta
}\left\{ DY\exp \left( \delta Y\right) |Z\right\} }{E_{\eta }\left\{ D\exp
\left( \delta Y\right) |Z\right\} }\right] \right\vert _{t=0} \\
&=&E_{\eta }\left[ \left[ \left( 1-D\right) \frac{E_{\eta }\left\{ DY\exp
\left( \delta Y\right) |Z\right\} }{E_{\eta }\left\{ D\exp \left( \delta
Y\right) |Z\right\} }-\psi \left( \eta \right) \right] g\right] +E_{\eta }%
\left[ E_{\eta }\left( 1-D|Z\right) \frac{\frac{d}{dt}\left. E_{\eta
_{t}}\left\{ DY\exp \left( \delta Y\right) |Z\right\} \right\vert _{t=0}}{%
E_{\eta }\left\{ D\exp \left( \delta Y\right) |Z\right\} }\right]  \\
&&+E_{\eta }\left[ E_{\eta }\left\{ 1-D|Z\right\} E_{\eta }\left\{ DY\exp
\left( \delta Y\right) |Z\right\} \frac{d}{dt}\left. \frac{1}{E_{\eta
_{t}}\left\{ D\exp \left( \delta Y\right) |Z\right\} }\right\vert _{t=0}%
\right]  \\
&=&E_{\eta }\left[ \left\{ \left( 1-D\right) \frac{E_{\eta }\left\{ DY\exp
\left( \delta Y\right) |Z\right\} }{E_{\eta }\left\{ D\exp \left( \delta
Y\right) |Z\right\} }-\psi \left( \eta \right) \right\} g\right]  \\
&&+E_{\eta }\left[ E_{\eta }\left( 1-D|Z\right) \frac{\left[ DY\exp \left(
\delta Y\right) -E_{\eta }\left\{ DY\exp \left( \delta Y\right) |Z\right\} %
\right] }{E_{\eta }\left\{ D\exp \left( \delta Y\right) |Z\right\} }g\right] 
\\
&&+E_{\eta }\left[ E_{\eta }\left\{ 1-D|Z\right\} E_{\eta }\left\{ DY\exp
\left( \delta Y\right) |Z\right\} \frac{\left[ D\exp \left( \delta Y\right)
-E_{\eta }\left\{ D\exp \left( \delta Y\right) |Z\right\} \right] }{E_{\eta
}\left\{ D\exp \left( \delta Y\right) |Z\right\} ^{2}}g\right] .
\end{eqnarray*}%
So, we conclude that 
\begin{eqnarray*}
\psi _{\eta }^{1} &=&\left( 1-D\right) \frac{E_{\eta }\left\{ DY\exp \left(
\delta Y\right) |Z\right\} }{E_{\eta }\left\{ D\exp \left( \delta Y\right)
|Z\right\} }-\psi \left( \eta \right) +E_{\eta }\left( 1-D|Z\right) \frac{%
\left[ DY\exp \left( \delta Y\right) -E_{\eta }\left\{ DY\exp \left( \delta
Y\right) |Z\right\} \right] }{E_{\eta }\left\{ D\exp \left( \delta Y\right)
|Z\right\} } \\
&&-E_{\eta }\left( 1-D|Z\right) E_{\eta }\left\{ DY\exp \left( \delta
Y\right) |Z\right\} \frac{\left[ D\exp \left( \delta Y\right) -E_{\eta
}\left\{ D\exp \left( \delta Y\right) |Z\right\} \right] }{E_{\eta }\left\{
D\exp \left( \delta Y\right) |Z\right\} ^{2}} \\
&=&\left( 1-D\right) \frac{E_{\eta }\left\{ DY\exp \left( \delta Y\right)
|Z\right\} }{E_{\eta }\left\{ D\exp \left( \delta Y\right) |Z\right\} }%
+DY\exp \left( \delta Y\right) \frac{E_{\eta }\left( 1-D|Z\right) }{E_{\eta
}\left\{ D\exp \left( \delta Y\right) |Z\right\} } \\
&&-D\exp \left( \delta Y\right) \frac{E_{\eta }\left( 1-D|Z\right) E_{\eta
}\left\{ DY\exp \left( \delta Y\right) |Z\right\} }{E_{\eta }\left\{ D\exp
\left( \delta Y\right) |Z\right\} ^{2}}-\psi \left( \eta \right) .
\end{eqnarray*}%
On the other hand, letting $\mathcal{R}_{\eta }^{\ast }\left( D,Z\right) $
be the Riesz representer of the map $h\in L_{2}\left( P_{\eta ,\left(
D,Z\right) }\right) \rightarrow E_{\eta }\left\{ m_{1}^{\ast }\left(
O,h\right) \right\} ,$ we have 
\begin{eqnarray*}
&&\left. \frac{d}{dt}E_{\eta _{t}}\left\{ m_{1}^{\ast }\left( O,a_{\eta
_{t}}^{\ast }\right) \right\} +E_{\eta _{t}}\left( S_{0}^{\ast }\right)
\right\vert _{t=0} \\
&=&E_{\eta }\left[ \left[ m_{1}^{\ast }\left( O,a^{\ast }\right) -E_{\eta
}\left\{ m_{1}^{\ast }\left( O,a^{\ast }\right) \right\} \right] g\right]
+\left. \frac{d}{dt}E_{\eta }\left[ \mathcal{R}_{\eta }^{\ast }\left(
D,Z\right) E_{\eta _{t}}\left\{ \left. q\left( O\right) \right\vert
D,Z\right\} \right] \right\vert _{t=0}+E_{\eta }\left[ \left\{ S_{0}^{\ast
}-E_{\eta }\left( S_{0}^{\ast }\right) \right\} g\right]  \\
&=&E_{\eta }\left\{ m_{1}^{\ast }\left( O,a^{\ast }\right) g\right\}
+E_{\eta }\left[ \mathcal{R}_{\eta }^{\ast }\left( D,Z\right) \left[ q\left(
O\right) -E_{\eta }\left\{ \left. q\left( O\right) \right\vert D,Z\right\} %
\right] g\right] +E_{\eta }\left( S_{0}^{\ast }g\right) -\sigma \left( \eta
\right) 
\end{eqnarray*}%
from where we conclude that 
\begin{equation*}
\sigma _{\eta }^{1}=m_{1}^{\ast }\left( O,a^{\ast }\right) +\mathcal{R}%
_{\eta }^{\ast }\left( D,Z\right) \left[ q\left( O\right) -E_{\eta }\left\{
\left. q\left( O\right) \right\vert D,Z\right\} \right] +S_{0}^{\ast
}-\sigma (\eta ).
\end{equation*}%
The uniqueness of influence functions $\sigma _{\eta }^{1}$ and $\psi _{\eta
}^{1}$ implies that 
\begin{eqnarray}
&&\left( 1-D\right) \frac{E_{\eta }\left\{ DY\exp \left( \delta Y\right)
|Z\right\} }{E_{\eta }\left\{ D\exp \left( \delta Y\right) |Z\right\} }%
+DY\exp \left( \delta Y\right) \frac{E_{\eta }\left\{ 1-D|Z\right\} }{%
E_{\eta }\left\{ D\exp \left( \delta Y\right) |Z\right\} }
\label{eq:evaluatea1} \\
&&-D\exp \left( \delta Y\right) \frac{E_{\eta }\left\{ 1-D|Z\right\} E_{\eta
}\left\{ DY\exp \left( \delta Y\right) |Z\right\} }{E_{\eta }\left\{ D\exp
\left( \delta Y\right) |Z\right\} ^{2}}  \label{eq:evaluatea2} \\
&=&m_{1}^{\ast }\left( O,a^{\ast }\right) +\mathcal{R}_{\eta }^{\ast }\left(
D,Z\right) \left\{ q\left( O\right) -E_{\eta }\left[ \left. q\left( O\right)
\right\vert D,Z\right] \right\} +S_{0}^{\ast }  \notag
\end{eqnarray}%
Now, taking $\eta $ and $\eta ^{\prime }$ that agree on the law of $Y,D|Z,$
but disagree on the marginal of $Z,$ the left hand side agree on these two
laws as well as $a^{\ast }$ so, subtracting one from the other we obtain 
\begin{eqnarray*}
0 &=&\left\{ \mathcal{R}_{\eta }^{\ast }\left( D,Z\right) -\mathcal{R}_{\eta
^{\prime }}^{\ast }\left( D,Z\right) \right\} \left[ q\left( O\right)
-E_{\eta }\left\{ \left. q\left( O\right) \right\vert D,Z\right\} \right]  \\
&=&\left\{ \mathcal{R}_{\eta }^{\ast }\left( D=1,Z\right) -\mathcal{R}_{\eta
^{\prime }}^{\ast }\left( D=1,Z\right) \right\} D\left[ q^{\ast }\left(
Y,Z\right) -E_{\eta }\left\{ \left. q^{\ast }\left( Y,Z\right) \right\vert
D=1,Z\right\} \right] 
\end{eqnarray*}%
Since $q^{\ast }\left( Y,Z\right) $ depends on $Y$ then $\mathcal{R}_{\eta
}^{\ast }\left( D=1,Z\right) -\mathcal{R}_{\eta ^{\prime }}^{\ast }\left(
D=1,Z\right) $ must be equal to 0$.$ So, we conclude that $R_{\eta }^{\ast
}\left( D=1,Z\right) $\ depends on $\eta $\ only through the law of $Y,D|Z.$

Next, for any $h\left( D,Z\right) =Du\left( Z\right) ,$ we have 
\begin{equation*}
E_{\eta }\left[ E_{\eta }\left\{ m_{1}^{\ast }\left( O,h\right) |Z\right\} %
\right] =E_{\eta }\left[ E_{\eta }\left\{ \left. \mathcal{R}_{\eta }^{\ast
}\left( D=1,Z\right) Du\left( Z\right) \right\vert Z\right\} \right] \text{
for all }\eta . 
\end{equation*}%
So, since $\mathcal{R}_{\eta }^{\ast }\left( D=1,Z\right) $ does not depend
on the marginal law of $Z,$ we conclude that 
\begin{equation*}
E_{\eta }\left\{ m_{1}^{\ast }\left( O,h\right) |Z\right\} =E_{\eta }\left\{
\left. \mathcal{R}_{\eta }^{\ast }\left( D=1,Z\right) Du\left( Z\right)
\right\vert Z\right\} 
\end{equation*}%
or equivalently 
\begin{align*}
& E_{\eta }\left\{ m_{1}^{\ast }\left( O,h\right) |D=0,Z\right\} E_{\eta
}\left( \left. 1-D\right\vert Z\right) +E_{\eta }\left\{ m_{1}^{\ast }\left(
O,h\right) |D=1,Z\right\} E_{\eta }\left( \left. D\right\vert Z\right) = \\
& \mathcal{R}_{\eta }^{\ast }\left( D=1,Z\right) u\left( Z\right) E_{\eta
}\left( \left. D\right\vert Z\right) .
\end{align*}

Suppose that $z^{\ast }$ is such that $u\left( z^{\ast }\right) =0.$ Then, 
\begin{equation*}
m_{1}^{\ast }\left\{ \left( 0,0,z^{\ast }\right) ,h\right\} +\left[ E_{\eta
}\left\{ m_{1}^{\ast }\left( O,h\right) |D=1,Z=z^{\ast }\right\}
-m_{1}^{\ast }\left\{ \left( 0,0,z^{\ast }\right) ,h\right\} \right] E_{\eta
}\left( \left. D\right\vert Z=z^{\ast }\right) =0
\end{equation*}%
where to arrive at the left hand side we have used the fact that $E_{\eta
}\left\{ m_{1}^{\ast }\left( O,h\right) |D=0,Z\right\} =m_{1}^{\ast }\left\{
\left( 0,0,Z\right) ,h\right\} $. Now, since $E_{\eta }\left\{ m_{1}^{\ast
}\left( O,h\right) |D=1,Z=z^{\ast }\right\} $ does not depend on the law of $%
D|Z,$ then letting $E_{\eta }\left( \left. D\right\vert Z=z^{\ast }\right)
\rightarrow 0$ we conclude that $m_{1}^{\ast }\left\{ \left( 0,0,z^{\ast
}\right) ,h\right\} =0$ and consequently also $E_{\eta }\left\{ m_{1}^{\ast
}\left( O,h\right) |D=1,Z=z^{\ast }\right\} =0.$

Next, for any $Z=z$ such that $u\left( z\right) \not=0$ we write 
\begin{eqnarray*}
&&\frac{1}{E_{\eta }\left( \left. D\right\vert Z=z\right) }\frac{m_{1}^{\ast
}\left\{ \left( 0,0,z\right) ,h\right\} }{u\left( z\right) }+\frac{\left[
E_{\eta }\left\{ m_{1}^{\ast }\left( O,h\right) |D=1,Z=z\right\}
-m_{1}^{\ast }\left\{ \left( 0,0,z\right) ,h\right\} \right] }{u\left(
z\right) } \\
&=&\mathcal{R}_{\eta }^{\ast }\left( D=1,z\right) 
\end{eqnarray*}%
and since $\mathcal{R}_{\eta }^{\ast }\left( D=1,z\right) $ does not depend
on $u$, then taking any other $u^{\ast }$ with $u^{\ast }\left( z\right)
\not=0,$ we have 
\begin{eqnarray*}
0 &=&\frac{1}{E_{\eta }\left( \left. D\right\vert Z=z\right) }\left[ \frac{%
m_{1}^{\ast }\left\{ \left( 0,0,z\right) ,h\right\} }{u\left( z\right) }-%
\frac{m_{1}^{\ast }\left\{ \left( 0,0,z\right) ,h^{\ast }\right\} }{u^{\ast
}\left( z\right) }\right]  \\
&&+\left[ \frac{\left[ E_{\eta }\left\{ m_{1}^{\ast }\left( O,h\right)
|D=1,Z=z\right\} -m_{1}^{\ast }\left\{ \left( 0,0,z\right) ,h\right\} \right]
}{u\left( z\right) }\right]  \\
&&-\left[ \frac{\left[ E_{\eta }\left\{ m_{1}^{\ast }\left( O,h^{\ast
}\right) |D=1,Z=z\right\} -m_{1}^{\ast }\left\{ \left( 0,0,z\right) ,h^{\ast
}\right\} \right] }{u^{\ast }\left( z\right) }\right] .
\end{eqnarray*}%
Once again, since none of the terms in squared brackets depend on the law of 
$D|Z,$ the right hand side is a linear function of $\alpha \equiv 1/E_{\eta
}\left( \left. D\right\vert Z=z\right) $ which can take any value in $\left(
1,\infty \right) ,$ but the left hand side is identically equal to 0.
Therefore, 
\begin{equation*}
\frac{m_{1}^{\ast }\left\{ \left( 0,0,z\right) ,h\right\} }{u\left( z\right) 
}-\frac{m_{1}^{\ast }\left\{ \left( 0,0,z\right) ,h^{\ast }\right\} }{%
u^{\ast }\left( z\right) }=0.
\end{equation*}%
So, we conclude that there exists a function $c\left( z\right) $ independent
of $\eta $ such that for all $h\left( D,Z\right) =Du\left( Z\right) $ 
\begin{equation}
m_{1}^{\ast }\left\{ \left( 0,0,z\right) ,h\right\} =c\left( z\right)
u\left( z\right)   \label{eq:mainu}
\end{equation}

Next, return to the equations \eqref{eq:evaluatea1} and \eqref{eq:evaluatea2}
and evaluate them at $D=0,$ to obtain 
\begin{equation*}
\frac{E_{\eta }\left\{ Y\exp \left( \delta Y\right) |D=1,Z\right\} }{E_{\eta
}\left\{ \exp \left( \delta Y\right) |D=1,Z\right\} }=m_{1}^{\ast }\left(
0,0,Z,a^{\ast }\right) +t\left( Z\right) 
\end{equation*}%
where $t\left( Z\right) $ is equal to $S_{0}^{\ast }\left( 0,0,Z\right) ,$
i.e. to $S_{0}^{\ast }$ evaluated at $D=0.$ Next, recalling that $a^{\ast
}\left( D,Z\right) =DE\left[ q^{\ast }\left( Y,Z\right) |D=1,Z\right] $ and
invoking $\left( \ref{eq:mainu}\right) $ we conclude that 
\begin{equation*}
\frac{E_{\eta }\left\{ Y\exp \left( \delta Y\right) |D=1,Z\right\} }{E_{\eta
}\left\{ \exp \left( \delta Y\right) |D=1,Z\right\} }=c\left( Z\right)
E_{\eta }\left\{ q^{\ast }\left( Y,Z\right) |D=1,Z\right\} +t\left( Z\right) 
\end{equation*}%
where $c\left( z\right) $ and $t\left( z\right) $ are functions of $z$ that
do not depend on $\eta $. We will now show that the last equality cannot
hold for all $\eta $ if $\delta \not=0.$ To do so, we re-write the last
identity as 
\begin{equation}
\frac{E_{\eta }\left\{ DY\exp \left( \delta Y\right) |Z\right\} }{E_{\eta
}\left\{ D\exp \left( \delta Y\right) |Z\right\} }=c\left( Z\right) \frac{%
E_{\eta }\left\{ Dq^{\ast }\left( Y,Z\right) |Z\right\} }{E_{\eta }\left(
D|Z\right) }+t\left( Z\right) .  \label{eq:id1}
\end{equation}%
If this identity holds for all $\eta ,$ then taking expectations on both
sides we have that for all $\eta $ 
\begin{equation}
E_{\eta }\left[ \frac{E_{\eta }\left\{ DY\exp \left( \delta Y\right)
|Z\right\} }{E_{\eta }\left\{ D\exp \left( \delta Y\right) |Z\right\} }%
\right] =E_{\eta }\left[ c\left( Z\right) \frac{E_{\eta }\left\{ Dq^{\ast
}\left( Y,Z\right) |Z\right\} }{E_{\eta }\left( D|Z\right) }\right] +E_{\eta
}\left\{ t\left( Z\right) \right\}   \label{eq:id2}
\end{equation}%
or equivalently, for all $\eta $%
\begin{equation*}
E_{\eta }\left[ \frac{DY\exp \left( \delta Y\right) }{E_{\eta }\left\{ D\exp
\left( \delta Y\right) |Z\right\} }\right] =E_{\eta }\left\{ c\left(
Z\right) \frac{Dq^{\ast }\left( Y,Z\right) }{E_{\eta }\left( D|Z\right) }%
\right\} +E_{\eta }\left\{ t\left( Z\right) \right\} .
\end{equation*}%
Since the functionals on the left and right hand-sides are identical, their
influence functions must agree. Then, taking an arbitrary submodel $%
t\rightarrow \eta _{t}$ with score $g$ at $\eta _{t=0}=\eta $ we have 
\begin{equation*}
\left. \frac{d}{dt}E_{\eta _{t}}\left[ \frac{DY\exp \left( \delta Y\right) }{%
E_{\eta _{t}}\left\{ D\exp \left( \delta Y\right) |Z\right\} }\right]
\right\vert _{t=0}=\left. \frac{d}{dt}E_{\eta _{t}}\left\{ c\left( Z\right) 
\frac{Dq^{\ast }\left( Y,Z\right) }{E_{\eta _{t}}\left( D|Z\right) }\right\}
\right\vert _{t=0}+\left. \frac{d}{dt}E_{\eta _{t}}\left\{ t\left( Z\right)
\right\} \right\vert _{t=0}
\end{equation*}%
from where we conclude that 
\begin{eqnarray*}
&&E_{\eta }\left[ \left[ \frac{DY\exp \left( \delta Y\right) }{E_{\eta
}\left\{ D\exp \left( \delta Y\right) |Z\right\} }-E_{\eta }\left[ \frac{%
DY\exp \left( \delta Y\right) }{E_{\eta }\left\{ D\exp \left( \delta
Y\right) |Z\right\} }\right] \right] g\right]  \\
&&-E_{\eta }\left[ \frac{E_{\eta }\left\{ DY\exp \left( \delta Y\right)
|Z\right\} \left[ D\exp \left( \delta Y\right) -E_{\eta }\left\{ D\exp
\left( \delta Y\right) |Z\right\} \right] }{E_{\eta }\left[ D\exp \left(
\delta Y\right) |Z\right] ^{2}}g\right]  \\
&=&E_{\eta }\left[ \left[ c\left( Z\right) \frac{Dq^{\ast }\left( Y,Z\right) 
}{E_{\eta }\left( D|Z\right) }-E_{\eta }\left\{ c\left( Z\right) \frac{%
Dq^{\ast }\left( Y,Z\right) }{E_{\eta }\left( D|Z\right) }\right\} \right] g%
\right]  \\
&&-E_{\eta }\left[ c\left( Z\right) \frac{E_{\eta }\left\{ \left. Dq^{\ast
}\left( Y,Z\right) \right\vert Z\right\} }{E_{\eta }\left( D|Z\right) ^{2}}%
\left\{ D-E_{\eta }\left( D|Z\right) \right\} g\right] +E_{\eta }\left[
t\left( Z\right) -E_{\eta }\left\{ t\left( Z\right) \right\} g\right] 
\end{eqnarray*}%
Consequently,

\begin{eqnarray*}
&&\left[ \frac{DY\exp \left( \delta Y\right) }{E_{\eta }\left\{ D\exp \left(
\delta Y\right) |Z\right\} }-E_{\eta }\left[ \frac{DY\exp \left( \delta
Y\right) }{E_{\eta }\left\{ D\exp \left( \delta Y\right) |Z\right\} }\right] %
\right]  \\
&&-\frac{E_{\eta }\left\{ DY\exp \left( \delta Y\right) |Z\right\} \left[
D\exp \left( \delta Y\right) -E_{\eta }\left\{ D\exp \left( \delta Y\right)
|Z\right\} \right] }{E_{\eta }\left\{ D\exp \left( \delta Y\right)
|Z\right\} ^{2}} \\
&=&c\left( Z\right) \frac{Dq^{\ast }\left( Y,Z\right) }{E_{\eta }\left(
D|Z\right) }-E_{\eta }\left\{ c\left( Z\right) \frac{Dq^{\ast }\left(
Y,Z\right) }{E_{\eta }\left( D|Z\right) }\right\} -c\left( Z\right) \frac{%
E_{\eta }\left\{ \left. Dq^{\ast }\left( Y,Z\right) \right\vert Z\right\} }{%
E_{\eta }\left( D|Z\right) ^{2}}\left\{ D-E_{\eta }\left[ D|Z\right]
\right\} + \\
&&t\left( Z\right) -E_{\eta }\left\{ t\left( Z\right) \right\} .
\end{eqnarray*}%
Invoking the equalities $\left( \ref{eq:id1}\right) $ and $\left( \ref%
{eq:id2}\right) ,$ the last identity is the same as 
\begin{eqnarray*}
&&\frac{1}{E_{\eta }\left\{ D\exp \left( \delta Y\right) |Z\right\} }DY\exp
\left( \delta Y\right) -\frac{E_{\eta }\left\{ DY\exp \left( \delta Y\right)
|Z\right\} }{E_{\eta }\left\{ D\exp \left( \delta Y\right) |Z\right\} ^{2}}%
D\exp \left( \delta Y\right)  \\
&=&c\left( Z\right) \frac{1}{E_{\eta }\left( D|Z\right) }Dq^{\ast }\left(
Y,Z\right) -Dc\left( Z\right) \frac{E_{\eta }\left\{ \left. Dq^{\ast }\left(
Y,Z\right) \right\vert Z\right\} }{E_{\eta }\left( D|Z\right) ^{2}}
\end{eqnarray*}%
or equivalently%
\begin{eqnarray*}
&&\frac{1}{E_{\eta }\left\{ \exp \left( \delta Y\right) |D=1,Z\right\}
E_{\eta }\left( D|Z\right) }DY\exp \left( \delta Y\right) -\frac{E_{\eta
}\left\{ DY\exp \left( \delta Y\right) |Z\right\} }{E_{\eta }\left\{ \exp
\left( \delta Y\right) |D=1,Z\right\} ^{2}E_{\eta }\left( D|Z\right) ^{2}}%
D\exp \left( \delta Y\right)  \\
&=&c\left( Z\right) \frac{1}{E_{\eta }\left( D|Z\right) }Dq^{\ast }\left(
Y,Z\right) -Dc\left( Z\right) \frac{E_{\eta }\left\{ \left. Dq^{\ast }\left(
Y,Z\right) \right\vert Z\right\} }{E_{\eta }\left( D|Z\right) ^{2}}.
\end{eqnarray*}%
The last equality is equivalent to 
\begin{equation*}
\frac{D\exp \left( \delta Y\right) }{E_{\eta }\left\{ \exp \left( \delta
Y\right) |D=1,Z\right\} }\left[ Y-\frac{E_{\eta }\left\{ Y\exp \left( \delta
Y\right) |D=1,Z\right\} }{E_{\eta }\left\{ \exp \left( \delta Y\right)
|D=1,Z\right\} }\right] =Dc\left( Z\right) \left[ q^{\ast }\left( Y,Z\right)
-E_{\eta }\left\{ \left. q^{\ast }\left( Y,Z\right) \right\vert
D=1,Z\right\} \right] 
\end{equation*}%
The last equation cannot hold for all $\eta .$ To see this, evaluate the
left and right and sides at $y$ and $y^{\ast }$ with $y\not=y^{\ast },$ and
subtract one from the other, to obtain%
\begin{eqnarray}
&&\frac{D\exp \left( \delta y\right) }{E_{\eta }\left\{ \exp \left( \delta
Y\right) |D=1,Z\right\} }\left[ y-\frac{E_{\eta }\left\{ Y\exp \left( \delta
Y\right) |D=1,Z\right\} }{E_{\eta }\left\{ \exp \left( \delta Y\right)
|D=1,Z\right\} }\right]   \notag \\
&&-\frac{D\exp \left( \delta y^{\ast }\right) }{E_{\eta }\left\{ \exp \left(
\delta Y\right) |D=1,Z\right\} }\left[ y^{\ast }-\frac{E_{\eta }\left\{
Y\exp \left( \delta Y\right) |D=1,Z\right\} }{E_{\eta }\left\{ \exp \left(
\delta Y\right) |D=1,Z\right\} }\right]   \label{eq:contra} \\
&=&Dc\left( Z\right) \left\{ q^{\ast }\left( y,Z\right) -q^{\ast }\left(
y^{\ast },Z\right) \right\}   \notag
\end{eqnarray}%
The left hand side depends on $\eta $ whereas the right hand side does not.
We will show that this cannot occur when $\delta \not=0$. To do so, let $%
\eta $ and $\eta ^{\prime }$ correspond to two arbitrary distinct laws. Then
evaluating the left hand side at $\eta $ and at $\eta ^{\prime }$ and
subtracting one from the other, we obtain%
\begin{eqnarray*}
&&D\left[ \frac{1}{E_{\eta }\left\{ \exp \left( \delta Y\right)
|D=1,Z\right\} }-\frac{1}{E_{\eta ^{\prime }}\left\{ \exp \left( \delta
Y\right) |D=1,Z\right\} }\right] \exp \left( \delta y\right) y- \\
&&D\exp \left( \delta y\right) \left[ \frac{E_{\eta }\left\{ Y\exp \left(
\delta Y\right) |D=1,Z\right\} }{E_{\eta }\left\{ \exp \left( \delta
Y\right) |D=1,Z\right\} ^{2}}-\frac{E_{\eta ^{\prime }}\left\{ Y\exp \left(
\delta Y\right) |D=1,Z\right\} }{E_{\eta ^{\prime }}\left\{ \exp \left(
\delta Y\right) |D=1,Z\right\} ^{2}}\right] - \\
&&D\left[ \frac{1}{E_{\eta }\left\{ \exp \left( \delta Y\right)
|D=1,Z\right\} }-\frac{1}{E_{\eta ^{\prime }}\left\{ \exp \left( \delta
Y\right) |D=1,Z\right\} }\right] \exp \left( \delta y^{\ast }\right) y^{\ast
}- \\
&&D\exp \left( \delta y^{\ast }\right) \left[ \frac{E_{\eta }\left\{ Y\exp
\left( \delta Y\right) |D=1,Z\right\} }{E_{\eta }\left\{ \exp \left( \delta
Y\right) |D=1,Z\right\} ^{2}}-\frac{E_{\eta ^{\prime }}\left\{ Y\exp \left(
\delta Y\right) |D=1,Z\right\} }{E_{\eta ^{\prime }}\left\{ \exp \left(
\delta Y\right) |D=1,Z\right\} ^{2}}\right] =0
\end{eqnarray*}%
This holds for all $y$ and $y^{\ast }$. Then, regarding $y^{\ast },\eta $
and $\eta ^{\prime }$ as fixed and $y$ as a free variable the preceding
display is of the form 
\begin{equation*}
k_{1}\left( D,z\right) \exp \left( \delta y\right) y-k_{2}\left( D,z\right)
\exp \left( \delta y\right) +k_{3}\left( D,z\right) =0.
\end{equation*}%
Next, since $\exp \left( \delta y\right) y$ and $\exp \left( \delta y\right) 
$ are not the same function of $y,$ then the preceding identity can only
hold if $k_{j}\left( D,z\right) =0$ for $j=1,2,3.$\ In particular, the
equality $k_{j}\left( D,z\right) =0$ implies that 
\begin{equation*}
E_{\eta }\left\{ \exp \left( \delta Y\right) |D=1,Z\right\} =E_{\eta
^{\prime }}\left\{ \exp \left( \delta Y\right) |D=1,Z\right\} .
\end{equation*}%
But since $\eta $ and $\eta ^{\prime }$ are arbitrary, this implies that $%
E_{\eta }\left\{ \exp \left( \delta Y\right) |D=1,Z\right\} $ does not
depend on $\eta .$ This is a contradiction when $\delta \not=0$ because it
would imply that $\exp \left( \delta Y\right) =c^{\prime }\left( Z\right) $
for some function $c^{\prime }.$ This shows that $\sigma \left( \eta \right) 
$ cannot be equal to $\psi \left( \eta \right) .$

Next, we will show that $\psi \left( \eta \right) $ cannot be equal to any
parameter of the form 
\begin{equation*}
\kappa \left( \eta \right) \equiv E_{\eta }\left\{ m_{1}^{\ast }\left(
O,a^{\ast }\right) \right\} +E_{\eta }\left( S_{0}^{\ast }\right) 
\end{equation*}%
where $a^{\ast }\left( Z\right) =E_{\eta }\left\{ \left. q\left( O\right)
\right\vert Z\right\} $ for some statistic $q\left( O\right) ,$ the map $%
h\in L_{2}\left( P_{\eta ,Z}\right) \rightarrow E_{\eta }\left\{ m_{1}^{\ast
}\left( O,h\right) \right\} $ is continuous and linear for each $\eta $ and $%
S_{0}^{\ast }$ is a statistic. To proceed, just as before, we start by
arguing that if the parameter $\kappa \left( \eta \right) $ is the same as $%
\psi \left( \eta \right) $ then their unique influence functions must agree.
We have already computed the influence function $\psi _{\eta }^{1}$ of $\psi
\left( \eta \right) .$

On the other hand, it is easy to see that the influence function $\kappa
_{\eta }^{1}$ of $\kappa \left( \eta \right) $ is 
\begin{equation*}
\kappa _{\eta }^{1}=m_{1}^{\ast }\left( O,a^{\ast }\right) +\mathcal{R}%
_{\eta }^{\ast }\left( Z\right) \left\{ q\left( O\right) -E_{\eta }\left\{
\left. q\left( O\right) \right\vert Z\right\} \right\} +S_{0}^{\ast }-\kappa
\left( \eta \right) 
\end{equation*}%
where $\mathcal{R}_{\eta }^{\ast }\left( Z\right) $ is the Riesz representer
of the map $h\in L_{2}\left( P_{\eta ,Z}\right) \rightarrow E_{\eta }\left\{
m_{1}^{\ast }\left( O,h\right) \right\} .$ Consequently, equating $\kappa
_{\eta }^{1}$ with $\psi _{\eta }^{1}$ we obtain 
\begin{eqnarray}
&&\left( 1-D\right) \frac{E_{\eta }\left\{ DY\exp \left( \delta Y\right)
|Z\right\} }{E_{\eta }\left\{ D\exp \left( \delta Y\right) |Z\right\} }%
+DY\exp \left( \delta Y\right) \frac{E_{\eta }\left( 1-D|Z\right) }{E_{\eta
}\left\{ D\exp \left( \delta Y\right) |Z\right\} }-  \label{eq:evaluateb1} \\
&&D\exp \left( \delta Y\right) \frac{E_{\eta }\left( 1-D|Z\right) E_{\eta
}\left\{ DY\exp \left( \delta Y\right) |Z\right\} }{E_{\eta }\left\{ D\exp
\left( \delta Y\right) |Z\right\} ^{2}}  \label{eq:evaluateb2} \\
&=&m_{1}^{\ast }\left( O,a^{\ast }\right) +\mathcal{R}_{\eta }^{\ast }\left(
Z\right) \left[ q\left( O\right) -E_{\eta }\left\{ \left. q\left( O\right)
\right\vert Z\right\} \right] +S_{0}^{\ast }.  \notag
\end{eqnarray}%
Now, taking $\eta $ and $\eta ^{\prime }$ that agree on the law of $Y,D|Z,$
but disagree on the marginal of $Z,$ the left hand side agree on these two
laws as well as $a^{\ast }$ so, subtracting one from the other we obtain 
\begin{equation*}
0=\left\{ \mathcal{R}_{\eta }^{\ast }\left( Z\right) -\mathcal{R}_{\eta
^{\prime }}^{\ast }\left( Z\right) \right\} \left[ q\left( O\right) -E_{\eta
}\left\{ \left. q\left( O\right) \right\vert Z\right\} \right] .
\end{equation*}%
Since $q\left( O\right) $ depends on $\left( D,Y\right) $ then $\left\{ 
\mathcal{R}_{\eta }^{\ast }\left( Z\right) -\mathcal{R}_{\eta ^{\prime
}}^{\ast }\left( Z\right) \right\} $ must be equal to 0$.$ So, we conclude
that $R_{\eta }^{\ast }\left( Z\right) $\ depends on $\eta $\ only through
the law of $Y,D|Z.$

Next, for any $h\left( Z\right) ,$ we have 
\begin{equation*}
E_{\eta }\left[ E_{\eta }\left\{ m_{1}^{\ast }\left( O,h\right) |Z\right\} %
\right] =E_{\eta }\left\{ \mathcal{R}_{\eta }^{\ast }\left( Z\right) h\left(
Z\right) \right\} \text{ for all }\eta .
\end{equation*}%
So, since $\mathcal{R}_{\eta }^{\ast }\left( Z\right) $ does not depend on
the marginal law of $Z,$ we conclude that 
\begin{equation*}
E_{\eta }\left\{ m_{1}^{\ast }\left( O,h\right) |Z\right\} =\mathcal{R}%
_{\eta }^{\ast }\left( Z\right) h\left( Z\right) .
\end{equation*}%
The last equality is the same as 
\begin{equation*}
m_{1}^{\ast }\left( 0,0,Z,h\right) E_{\eta }\left( 1-D|Z\right) +E_{\eta
}\left\{ m_{1}^{\ast }\left( O,h\right) |D=1,Z\right\} E_{\eta }\left(
D|Z\right) =\mathcal{R}_{\eta }^{\ast }\left( Z\right) h\left( Z\right) 
\end{equation*}%
or equivalently 
\begin{equation*}
m_{1}^{\ast }\left( 0,0,Z,h\right) +\left[ E_{\eta }\left\{ m_{1}^{\ast
}\left( O,h\right) |D=1,Z\right\} -m_{1}^{\ast }\left( 0,0,Z,h\right) \right]
E_{\eta }\left( D|Z\right) =\mathcal{R}_{\eta }^{\ast }\left( Z\right)
h\left( Z\right) 
\end{equation*}

If $z^{\ast }$ is such that $h\left( z^{\ast }\right) =0,$ then 
\begin{equation*}
m_{1}^{\ast }\left( 0,0,z^{\ast },h\right) +\left[ E_{\eta }\left\{
m_{1}^{\ast }\left( O,h\right) |D=1,Z=z^{\ast }\right\} -m_{1}^{\ast }\left(
0,0,z^{\ast },h\right) \right] E_{\eta }\left( D|Z=z^{\ast }\right) =0 
\end{equation*}%
and since $E_{\eta }\left\{ m_{1}^{\ast }\left( O,h\right) |D=1,Z=z^{\ast
}\right\} $ does not depend on the law of $D|Z,$ then since $E_{\eta }\left(
D|Z=z^{\ast }\right) $ can take any value in $\left( 0,1\right) ,$ we
conclude that $m_{1}^{\ast }\left( 0,0,z^{\ast },h\right) =0$.

On the other hand, for $z$ such that $h\left( z\right) \not=0,$ we have

\begin{equation*}
\frac{m_{1}^{\ast }\left( 0,0,z,h\right) }{h\left( z\right) }+\frac{\left[
E_{\eta }\left\{ m_{1}^{\ast }\left( O,h\right) |D=1,Z=z\right\}
-m_{1}^{\ast }\left( 0,0,z,h\right) \right] }{h\left( z\right) }E_{\eta
}\left( D|Z=z\right) =\mathcal{R}_{\eta }^{\ast }\left( Z=z\right) 
\end{equation*}%
Consequently, for any other $h^{\ast }$ such that $h\left( z^{\ast }\right)
\not=0,$ we have 
\begin{eqnarray*}
0 &=&\left\{ \frac{m_{1}^{\ast }\left( 0,0,z,h\right) }{h\left( z\right) }-%
\frac{m_{1}^{\ast }\left( 0,0,z,h^{\ast }\right) }{h^{\ast }\left( z\right) }%
\right\} + \\
&&\frac{\left[ E_{\eta }\left\{ m_{1}^{\ast }\left( O,h\right)
|D=1,Z=z\right\} -m_{1}^{\ast }\left( 0,0,z,h\right) \right] }{h\left(
z\right) }E_{\eta }\left( D|Z=z\right) - \\
&&\frac{\left[ E_{\eta }\left\{ m_{1}^{\ast }\left( O,h^{\ast }\right)
|D=1,Z=z\right\} -m_{1}^{\ast }\left( 0,0,z,h^{\ast }\right) \right] }{%
h^{\ast }\left( z\right) }E_{\eta }\left( D|Z=z\right) .
\end{eqnarray*}%
Once again, since $E_{\eta }\left\{ m_{1}^{\ast }\left( O,h^{\ast }\right)
|D=1,Z=z\right\} $ and $E_{\eta }\left\{ m_{1}^{\ast }\left( O,h\right)
|D=1,Z=z\right\} $ do not depend on the law of $D|Z,$ and since $E_{\eta
}\left( D|Z=z\right) $ can take any value in $\left( 0,1\right) $ we
conclude that 
\begin{equation*}
\frac{m_{1}^{\ast }\left( 0,0,Z,h\right) }{h\left( Z\right) }-\frac{%
m_{1}^{\ast }\left( 0,0,Z,h^{\ast }\right) }{h^{\ast }\left( Z\right) }=0
\end{equation*}%
Consequently, there exists a function $c\left( Z\right) $ such that for all $%
h$%
\begin{equation}
m_{1}^{\ast }\left( 0,0,Z,h\right) =c\left( Z\right) h\left( Z\right) .
\label{eq:otra}
\end{equation}%
Now, evaluating \eqref{eq:evaluateb1} and \eqref{eq:evaluateb2} at $D=0$ 
\begin{equation*}
\frac{E_{\eta }\left\{ DY\exp \left( \delta Y\right) |Z\right\} }{E_{\eta
}\left\{ D\exp \left( \delta Y\right) |Z\right\} }=m_{1}^{\ast }\left(
0,0,Z,a^{\ast }\right) +\mathcal{R}_{\eta }^{\ast }\left( Z\right) \underset{%
=0}{\underbrace{\left[ q\left( 0,0,Z\right) -E_{\eta }\left\{ \left. q\left(
0,0,Z\right) \right\vert Z\right\} \right] }}+S_{0}^{\ast }\left(
0,0,Z\right) .
\end{equation*}%
So, with $t\left( Z\right) \equiv S_{0}^{\ast }\left( 0,0,Z\right) $ and
with $a^{\ast }\left( Z\right) =E_{\eta }\left\{ \left. q\left( O\right)
\right\vert Z\right\} $ substituted for $h$ in $\left( \ref{eq:otra}\right) $
we arrive at the conclusion that the following equality must hold for all $%
\eta $ 
\begin{equation*}
\frac{E_{\eta }\left\{ DY\exp \left( \delta Y\right) |Z\right\} }{E_{\eta
}\left\{ D\exp \left( \delta Y\right) |Z\right\} }=c\left( Z\right) E_{\eta
}\left\{ \left. q\left( O\right) \right\vert Z\right\} +t\left( Z\right) .
\end{equation*}%
Therefore, 
\begin{equation*}
E_{\eta }\left[ \frac{DY\exp \left( \delta Y\right) }{E_{\eta }\left\{ D\exp
\left( \delta Y\right) |Z\right\} }\right] =E_{\eta }\left\{ c\left(
Z\right) q\left( O\right) +t\left( Z\right) \right\} .
\end{equation*}%
Again equating the influence functions of the functionals on the right and
left hand sides we conclude that 
\begin{equation*}
\frac{DY\exp \left( \delta Y\right) }{E_{\eta }\left\{ D\exp \left( \delta
Y\right) |Z\right\} }-\frac{E_{\eta }\left\{ YD\exp \left( \delta Y\right)
|Z\right\} }{E_{\eta }\left\{ D\exp \left( \delta Y\right) |Z\right\} ^{2}}%
\left[ D\exp \left( \delta Y\right) -E_{\eta }\left\{ D\exp \left( \delta
Y\right) |Z\right\} \right] =c\left( Z\right) q\left( O\right) +t\left(
Z\right) 
\end{equation*}%
which leads to a contradiction when $\delta \not=0.$ To arrive at the
contradiction we would reason just as we did to show that the equality $%
\left( \ref{eq:contra}\right) $ leads to a contradiction.
\end{proof}

\begin{proof}

Here we prove that the parameter in Example \protect\ref{ex:APE},
is in the class of \protect\cite{NeweyCherno19} but not in the class of 
\protect\cite{Robins08Higher}.

Let  
\begin{equation*}
\chi \left( \eta \right) \equiv E_{\eta }\left\{ \int_{0}^{1}E_{\eta }\left(
Y|D=u,L\right) w\left( u\right) du\right\} .
\end{equation*}%
Its influence function is 
\begin{equation*}
\chi _{\eta }^{1}=\int_{0}^{1}E_{\eta }\left( Y|D=u,L\right) w\left(
u\right) du+\frac{w\left( D\right) }{f_{\eta }\left( D|L\right) }\left\{
Y-E_{\eta }\left( Y|D,L\right) \right\} -\chi \left( \eta \right) .
\end{equation*}%
Suppose the parameter $\chi \left( \eta \right) $ is in the class of \cite%
{Robins08Higher} for some functions $a^{\ast }\left( D,L\right) $ and $%
b^{\ast }\left( D,L\right) $ which are non-constant in $D$ and in $L.$ Then,
there would exist statistics $S_{ab},S_{a},S_{b}$ and $S_{0}$ such that 
\begin{equation*}
a^{\ast }\left( D,L\right) =-\frac{E_{\eta }\left( S_{b}|D,L\right) }{%
E_{\eta }\left( S_{ab}|D,L\right) },b^{\ast }\left( D,L\right) =--\frac{%
E_{\eta }\left( S_{a}|D,L\right) }{E_{\eta }\left( S_{ab}|D,L\right) }
\end{equation*}%
and such that%
\begin{equation*}
\chi _{\eta }^{1}=S_{ab}a^{\ast }\left( D,L\right) b^{\ast }\left(
D,L\right) +S_{a}a^{\ast }\left( D,L\right) +S_{b}b^{\ast }\left( D,L\right)
+S_{0}-\chi \left( \eta \right) .
\end{equation*}

Then, equating the influence functions we arrive at 
\begin{align*}
&\int_{0}^{1}E_{\eta }\left( Y|D=u,L\right) w\left( u\right) du+\frac{%
w\left( D\right) }{f_{\eta }\left( D|L\right) }\left\{ Y-E_{\eta }\left(
Y|D,L\right) \right\} \\
&=S_{ab}a^{\ast }\left( D,L\right) b^{\ast }\left( D,L\right) +S_{a}a^{\ast
}\left( D,L\right) +S_{b}b^{\ast }\left( D,L\right) +S_{0}
\end{align*}%
The right hand side does not depend on $f_{\eta }\left( D|L\right) .$ On the
other hand, in the left hand side 
\begin{equation*}
\int_{0}^{1}E_{\eta }\left( Y|D=u,L\right) w\left( u\right) du 
\end{equation*}
does not depend on $f_{\eta }\left( D|L\right) $ but $\frac{w\left( D\right) 
}{f_{\eta }\left( D|L\right) }\left\{ Y-E_{\eta }\left( Y|D,L\right)
\right\} $ depends on $f_{\eta }\left( D|L\right) $. This is a contradiction.

Now, suppose that we re-define $Z=L,$ so that we now partition $O$ into $%
\left( Y,D\right) $ and $Z=L.$ If the parameter $\chi \left( \eta \right) $
was in the class of \cite{Robins08Higher} for some functions $a^{\ast
}\left( L\right) $ and $b^{\ast }\left( L\right) $ which are non-constant in 
$L,$ then, there would exist statistics $S_{ab},S_{a},S_{b}$ and $S_{0}$
such that 
\begin{equation*}
a^{\ast }\left( L\right) =-\frac{E_{\eta }\left( S_{b}|L\right) }{E_{\eta
}\left( S_{ab}|L\right) },b^{\ast }\left( L\right) =--\frac{E_{\eta }\left(
S_{a}|L\right) }{E_{\eta }\left( S_{ab}|L\right) } 
\end{equation*}%
and such that%
\begin{equation*}
\chi _{\eta }^{1}=S_{ab}a^{\ast }\left( L\right) b^{\ast }\left( L\right)
+S_{a}a^{\ast }\left( L\right) +S_{b}b^{\ast }\left( L\right) +S_{0}-\chi
\left( \eta \right). 
\end{equation*}

Then equating the influence functions we would arrive at 
\begin{equation*}
\int_{0}^{1}E_{\eta }\left( Y|D=u,L\right) w\left( u\right) du+\frac{w\left(
D\right) }{f_{\eta }\left( D|L\right) }\left\{ Y-E_{\eta }\left(
Y|D,L\right) \right\} =S_{ab}a^{\ast }\left( L\right) b^{\ast }\left(
L\right) +S_{a}a^{\ast }\left( L\right) +S_{b}b^{\ast }\left( L\right)
+S_{0}.
\end{equation*}%
This is also a contradiction because, for each fixed $D=d,L=l$ the right
hand side depends on the value of $f_{\eta }\left( d|l\right) ,$ i.e. on $%
f_{\eta }\left( D|L\right) $ evaluated at $D=d,L=l,$ but the right hand side
does not, since for a given law of $Y|D,L,$ one can always modify $f_{\eta
}\left( D|L\right) $ at a point $D=d,L=l$ and still obtain the same values
of $E_{\eta }\left( S_{b}|L=l\right) ,E_{\eta }\left( S_{a}|L=l\right) $ and 
$E_{\eta }\left( S_{ab}|L=l\right) .$
\end{proof}

\bibliographystyle{apalike}
\bibliography{non-linear}

\end{document}